\numberwithin{equation}{section}
\def\eps{\varepsilon}
\def\mand{\qquad\mbox{and}\qquad}
\def\fl#1{\left\lfloor#1\right\rfloor}
\def\({\left(}
\def\){\right)}
\newcommand{\ind}[1]{\ensuremath{\mathbf{1}_{#1}}}  
\newcommand{\e}{\ensuremath{\mathbf{e}}}
\newcommand{\cA}{\ensuremath{\mathcal{A}}}
\newcommand{\cB}{\ensuremath{\mathcal{B}}}
\newcommand{\cD}{\ensuremath{\mathcal{D}}}
\newcommand{\cF}{\ensuremath{\mathcal{F}}}
\newcommand{\cH}{\ensuremath{\mathcal{H}}}
\newcommand{\cK}{\ensuremath{\mathcal{K}}}
\newcommand{\cN}{\ensuremath{\mathcal{N}}}
\newcommand{\cS}{\ensuremath{\mathcal{S}}}
\newcommand{\cT}{\ensuremath{\mathcal{T}}}
\newcommand{\fH}{\ensuremath{\mathfrak{H}}}
\newcommand{\fS}{\ensuremath{\mathfrak{S}}}
\newcommand{\NN}{\ensuremath{\mathbb{N}}}
\newcommand{\PP}{\ensuremath{\mathbb{P}}}
\newcommand{\RR}{\ensuremath{\mathbb{R}}}
\newcommand{\ZZ}{\ensuremath{\mathbb{Z}}}
\newtheoremstyle{customthm}
{1em}                    
{1em}                    
{\itshape}               
{}                       
{\scshape}               
{.}                      
{5pt plus 1pt minus 1pt} 
{}                       
\newtheoremstyle{customrem}
{1em}                    
{1em}                    
{}                       
{}                       
{\scshape}               
{.}                      
{5pt plus 1pt minus 1pt} 
{}                       
\theoremstyle{customthm}
\newtheorem{X}{X}[section]
\newtheorem{conjecture}[X]{\bf Conjecture}  
\newtheorem{lemma}[X]{Lemma}
\newtheorem{proposition}[X]{\bf Proposition}
\theoremstyle{customrem}
\renewcommand{\le}{\ensuremath{\leqslant}}
\renewcommand{\ge}{\ensuremath{\geqslant}}
\def\fl#1{\left\lfloor#1\right\rfloor}
\renewcommand{\pod}[1]{\mathchoice
  {\allowbreak \if@display \mkern 5mu\else \mkern 5mu\fi (#1)}
  {\allowbreak \if@display \mkern 5mu\else \mkern 5mu\fi (#1)}
  {\mkern4mu(#1)}
  {\mkern4mu(#1)}
}
\newcommand*{\defeq}{\mathrel{\vcenter{\baselineskip0.5ex \lineskiplimit0pt
                     \hbox{\scriptsize.}\hbox{\scriptsize.}}}%
                     =}
\DeclareSymbolFont{EUEX}{U}{euex}{m}{n}
\DeclareSymbolFont{euexlargesymbols}{U}{euex}{m}{n}
\DeclareMathSymbol{\intop}{\mathop}{euexlargesymbols}{"52}
     \def\int{\intop\nolimits}
\DeclareSymbolFont{euexsymbols}     {U}{euex}{m}{n}
\DeclareMathSymbol{\smallint}{\mathop}{euexsymbols}{"52}
\def\sums{                        
   \@ifnextchar[
     {\sums@i}
     {\ensuremath{\sum}}    
}
\def\sums@i[#1]{
   \@ifnextchar[
     {\sums@ii{#1}}
     {\ensuremath{\sum_{#1}}}
}
\def\sums@ii#1[#2]{
   \@ifnextchar[
     {\sums@iii{#1}{#2}}
     {\ensuremath{\sum_{\substack{#1 \\ #2}}}}
}
 \def\sums@iii#1#2[#3]{
   \@ifnextchar[
     {\sums@iv{#1}{#2}{#3}}
     {\ensuremath{\sum_{\substack{#1 \\ #2 \\ #3}}}}
}
 \def\sums@iv#1#2#3[#4]{
   \@ifnextchar[
     {\sums@v{#1}{#2}{#3}{#4}}
     {\ensuremath{\sum_{\substack{#1 \\ #2 \\ #3 \\ #4}}}}
}
\def\sums@v#1#2#3#4[#5]{
     {\ensuremath{\sum_{\substack{#1 \\ #2 \\ #3 \\ #4 \\ #5}}}}
}
\def\sumss[#1]{
   \@ifnextchar[
     {\sumss@i[#1]}
     {
      \ifthenelse{\isempty{#1}} 
      {\ensuremath{\sum}}       
      {
       \ifthenelse{\equal{#1}{'}}
         {\ensuremath{\sideset{}{^{\prime}}{\sum}}}
         {\ensuremath{\sideset{}{^{#1}}{\sum}}} 
      }  
     }
}    
\def\sumss@i[#1][#2]{
   \@ifnextchar[
     {\sumss@ii[#1]{#2}}
     {
       \ifthenelse{\isempty{#1}} 
      {\ensuremath{\sum_{#2}}}       
      {
       \ifthenelse{\equal{#1}{'}}
         {\ensuremath{\sideset{}{^{\prime}}{\sum}_{#2}}}
         {\ensuremath{\sideset{}{^{#1}}{\sum}_{#2}}} 
      }  
     }
}
\def\sumss@ii[#1]#2[#3]{
   \@ifnextchar[
     {\sumss@iii[#1]{#2}{#3}}
     {
       \ifthenelse{\isempty{#1}} 
      {\ensuremath{\sum_{\substack{#2 \\ #3}}}}       
      {
       \ifthenelse{\equal{#1}{'}}
         {\ensuremath{\sideset{}{^{\prime}}{\sum}_{\substack{#2 \\ #3}}}}
         {\ensuremath{\sideset{}{^{#1}}{\sum}_{\substack{#2 \\ #3}}}}
      } 
     }
}
 \def\sumss@iii[#1]#2#3[#4]{
   \@ifnextchar[
     {\sumss@iv[#1]{#2}{#3}{#4}}
     {
       \ifthenelse{\isempty{#1}} 
      {\ensuremath{\sum_{\substack{#2 \\ #3 \\ #4}}}}       
      {
       \ifthenelse{\equal{#1}{'}}
         {\ensuremath{\sideset{}{^{\prime}}{\sum}_{\substack{#2 \\ #3 \\ #4}}}}
         {\ensuremath{\sideset{}{^{#1}}{\sum}_{\substack{#2 \\ #3 \\ #4}}}} 
      }  
     }
}
 \def\sumss@iv[#1]#2#3#4[#5]{
   \@ifnextchar[
     {\sumss@v[#1]{#2}{#3}{#4}{#5}}
     {
       \ifthenelse{\isempty{#1}} 
      {\ensuremath{\sum_{\substack{#2 \\ #3 \\ #4 \\ #5}}}}       
      {
       \ifthenelse{\equal{#1}{'}}
         {\ensuremath{\sideset{}{^{\prime}}{\sum}_{\substack{#2 \\ #3 \\ #4 \\ #5}}}}
         {\ensuremath{\sideset{}{^{#1}}{\sum}_{\substack{#2 \\ #3 \\ #4 \\ #5}}}} 
      }  
     }
}
\def\sumss@v[#1]#2#3#4#5[#6]{
     {\ifthenelse{\isempty{#1}} 
      {\ensuremath{\sum_{\substack{#2 \\ #3 \\ #4 \\ #5 \\ #6 }}}}       
      {
       \ifthenelse{\equal{#1}{'}}
         {\ensuremath{\sideset{}{^{\prime}}{\sum}_{\substack{#2 \\ #3 \\ #4 \\ #5 \\ #6 }}}}
         {\ensuremath{\sideset{}{^{#1}}{\sum}_{\substack{#2 \\ #3 \\ #4 \\ #5 \\ #6 }}}} 
      }  
     }
}
\def\sumsstxt[#1]{
   \@ifnextchar[
     {\sumsstxt@i[#1]}
     {
      \ifthenelse{\isempty{#1}} 
      {\ensuremath{\textstyle\sum}}       
      {
       \ifthenelse{\equal{#1}{'}}
         {\ensuremath{\sideset{}{^{\prime}}{\textstyle\sum}}}
         {\ensuremath{\sideset{}{^{#1}}{\textstyle\sum}}} 
      }  
     }
}    
\def\sumsstxt@i[#1][#2]{
   \@ifnextchar[
     {\sumsstxt@ii[#1]{#2}}
     {
       \ifthenelse{\isempty{#1}} 
      {\ensuremath{\textstyle\sum_{#2}}}       
      {
       \ifthenelse{\equal{#1}{'}}
         {\ensuremath{\sideset{}{^{\prime}}{\textstyle\sum}_{#2}}}
         {\ensuremath{\sideset{}{^{#1}}{\textstyle\sum}_{#2}}} 
      }  
     }
}
\def\sumsstxt@ii[#1]#2[#3]{
   \@ifnextchar[
     {\sumsstxt@iii[#1]{#2}{#3}}
     {
       \ifthenelse{\isempty{#1}} 
      {\ensuremath{\textstyle\sum_{\substack{#2 \\ #3}}}}       
      {
       \ifthenelse{\equal{#1}{'}}
         {\ensuremath{\sideset{}{^{\prime}}{\textstyle\sum}_{\substack{#2 \\ #3}}}}
         {\ensuremath{\sideset{}{^{#1}}{\textstyle\sum}_{\substack{#2 \\ #3}}}}
      } 
     }
}
 \def\sumsstxt@iii[#1]#2#3[#4]{
   \@ifnextchar[
     {\sumsstxt@iv[#1]{#2}{#3}{#4}}
     {
       \ifthenelse{\isempty{#1}} 
      {\ensuremath{\textstyle\sum_{\substack{#2 \\ #3 \\ #4}}}}       
      {
       \ifthenelse{\equal{#1}{'}}
         {\ensuremath{\sideset{}{^{\prime}}{\textstyle\sum}_{\substack{#2 \\ #3 \\ #4}}}}
         {\ensuremath{\sideset{}{^{#1}}{\textstyle\sum}_{\substack{#2 \\ #3 \\ #4}}}} 
      }  
     }
}
\def\sumsstxt@iv[#1]#2#3#4[#5]{
     {\ifthenelse{\isempty{#1}} 
      {\ensuremath{\textstyle\sum_{\substack{#2 \\ #3 \\ #4 \\ #5}}}}       
      {
       \ifthenelse{\equal{#1}{'}}
         {\ensuremath{\sideset{}{^{\prime}}{\textstyle\sum}_{\substack{#2 \\ #3 \\ #4 \\ #5}}}}
         {\ensuremath{\sideset{}{^{#1}}{\textstyle\sum}_{\substack{#2 \\ #3 \\ #4 \\ #5}}}} 
      }  
     }
}
\def\prods{              
   \@ifnextchar[
     {\prods@i}
     {\ensuremath{\prod}}    
}
\def\prods@i[#1]{
   \@ifnextchar[
     {\prods@ii{#1}}
     {\ensuremath{\prod_{#1}}}
}
\def\prods@ii#1[#2]{
   \@ifnextchar[
     {\prods@iii{#1}{#2}}
     {\ensuremath{\prod_{\substack{#1 \\ #2}}}}
}
 \def\prods@iii#1#2[#3]{
   \@ifnextchar[
     {\prods@iv{#1}{#2}{#3}}
     {\ensuremath{\prod_{\substack{#1 \\ #2 \\ #3}}}}
}
\def\prods@iv#1#2#3[#4]{
     {\ensuremath{\prod_{\substack{#1 \\ #2 \\ #3 \\ #4}}}}
}
\newcommand{\tind}[1]{\ensuremath{\widetilde{\mathbf{1}}_{#1}}} 
\title[Consecutive Piatetski-Shapiro primes]
      {Consecutive Piatetski-Shapiro primes based on the Hardy-Littlewood conjecture}
\author[Victor Zhenyu Guo]{Victor Zhenyu Guo}
\address{School of Mathematics and Statistics, Xi'an Jiaotong University, Xi'an, Shaanxi, China.}
\email{guozyv@xjtu.edu.cn}
\author[Yuan Yi]{Yuan Yi}
\address{School of Mathematics and Statistics, Xi'an Jiaotong University, Xi'an, Shaanxi, China.}
\email{yuanyi@xjtu.edu.cn}
\date{\today}
\begin{document}

\begin{abstract}
The Piatetski-Shapiro sequences are of the form $\mathcal{N}^{(c)} := (\lfloor n^c \rfloor)_{n=1}^\infty$ with $c > 1, c \not\in \mathbb{N}$. Let $p_n$ be the sequence of primes in ascending order. In this paper, we study the distribution of pairs $(p_n, p_{n+1})$ of consecutive primes such that $p_n \in \mathcal{N}^{(c_1)}$ and $p_{n+1} \in \mathcal{N}^{(c_2)}$ for $c_1, c_2 \in (1,2)$, $c_1 \neq c_2$ and give a conjecture with the prime counting functions of the pairs $(p_n, p_{n+1})$. We give a heuristic argument to support this prediction based on a model by Lemke Oliver and Soundararajan which relies on a strong form of the Hardy-Littlewood conjecture. Moreover, we prove a proposition related to the average of singular series with a weight of a complex exponential function. 
\end{abstract}

\maketitle

\begin{quote}
\textbf{MSC Numbers:} 11N05, 11B83.
\end{quote}

\begin{quote}
\textbf{Keywords:} Piatetski-Shapiro sequences, consecutive primes, Hardy-Littlewood conjectures, singular series.
\end{quote}



\section{Introduction}
\label{sec:intro}

The Piatetski-Shapiro sequences are sequences of the form
$$
\cN^{(c)} \defeq (\fl{n^c})_{n=1}^\infty\qquad(c>1,~c \not\in\NN).
$$
Piatetski-Shapiro~\cite{PS} proved that if $c\in(1,\frac{12}{11})$ the counting function
$$
\pi^{(c)}(x) \defeq | \big\{\text{\rm prime~}p\le x : p\in\cN^{(c)}\big\} |
$$
satisfies the asymptotic relation
$$
\pi^{(c)}(x) \sim \frac{x^{1/c}}{\log x} \qquad \text{~\rm as } x \to \infty.
$$
The admissible range for $c$ of the above formula has been extended many times and is currently known to hold for all $c\in(1,\frac{2817}{2426})$ thanks to Rivat and Sargos~\cite{RiSa}. Rivat and Wu~\cite{RiWu} also showed that there are infinitely many Piatetski-Shapiro primes for $c \in (1, \frac{243}{205})$. We refer the readers to see \cite{Guo} for more details of the improvements of $c$. The asymptotic relation is expected to hold for all values of $c \in (1,2)$. The estimation of Piatetski-Shapiro primes is an approximation of the well-known conjecture that there exist infinitely many primes of the form $n^2+1$. 

For a better understanding of the distribution of primes, it is natural to study consecutive primes, for example the twin prime conjecture. In this article, to understand the distribution of Piatetski-Shapiro primes, we are interested in the counting function of consecutive primes in Piatetski-Shapiro sequences. Let $p_n$ be the sequence of primes in ascending order. Let $\mathbf{c} \defeq (c_1, c_2, \cdots, c_r )$ with real numbers $c_i \in (1,2)$ for all integers $1 \le i \le r$. We define that
\begin{equation}
\label{eq:pic}
\pi(x;\mathbf{c}) \defeq |\{p_n \le x: p_{n+i-1} \in \cN^{(c_i)} \text{~for~} 1 \le i \le r\} |
\end{equation}
and aim at the behaviour of this counting function.

Our idea is inspired by a breakthrough in 2016 by Lemke Oliver and Soundararajan \cite{LO-S}. Let $q \ge 3$ and $\mathbf{a} \defeq (a_1, \cdots, a_r)$ with $(a_i,q) = 1$ for all $1 \le i \le r$. Applying a model based on a modified version of the Hardy-Littlewood conjecture, Lemke Oliver and Soundararajan~\cite{LO-S} investigated the biases in the occurrence of the pattern $\mathbf{a}$ in strings of $r$ consecutive primes reduced modulo $q$. In fact, they analyzed the counting function
$$
\pi(x;q,\mathbf{a}) \defeq | \{ p_n \le x : p_{n+i-1} \equiv a_i \bmod q \text{ for } 1 \le i \le r \} |.
$$

The method has been applied to analyze other consecutive sequences. David, Devin, Nam and Schlitt \cite{DDNS} applied Lemke Oliver and Soundararajan's method to study consecutive sums of two squares. Let 
$$
\mathbb{E} \defeq \{a^2+b^2: a, b \in \mathbb{Z} \} \defeq \{E_n: n \in \mathbb{N}\}. 
$$
By the Hardy-Littlewood conjectures in arithmetic progressions for sum of two squares, they gave a heuristic argument of a conjecture of the counting function 
$$
|\{E_n \le x: E_n \equiv a \pmod{q}, E_{n+1} \equiv b \pmod{q} \} |. 
$$

For any given real numbers $\alpha>0$ and  $\beta\ge 0$, the associated (generalized) Beatty sequence is defined by
$$
\cB_{\alpha,\beta}\defeq\big(\fl{\alpha m+\beta}\big)_{m\in\NN},
$$
which is also called the generalized arithmetic progression. Banks and Guo \cite{BaGu} gave a conjecture of the estimation of the counting function
$$
\big|\{p_n \le x: p_n \in \cB_{\alpha, \beta} \text{~and~} p_{n+1} \in\cB_{\hat\alpha, \hat\beta}\} \big|
$$
by a heuristic argument based on the method of Lemke Oliver and Soundararajan.

In this article, we apply a similar model to give a conjecture to the counting function \eqref{eq:pic}. In particular, the case for $r=2$ is a simple and interesting case. Fix real numbers $c_1,c_2 \in (1, 2)$. We write that
\begin{equation}
 \label{eq:PScf} 
\pi\big(x; (c_1, c_2)\big)\defeq \big|\{p_n \le x: p_n \in \cN^{(c_1)} \text{~and~} p_{n+1} \in \cN^{(c_2)} \} \big|
\end{equation}
We shall give a deatiled heuristic argument of the following conjecture in Section \ref{sec:3} with a data analysis in Section \ref{sec:6}.

\begin{conjecture}
\label{conj:main}
For any fixed positive number $\varepsilon > 0$ and $c_1 \neq c_2$, the counting function $\eqref{eq:PScf}$ satisfies that
$$
\pi\big(x; (c_1, c_2)\big) =\frac{x^{1/c_1+1/c_2-1}}{c_1 c_2 \log x} + O\(\frac{x^{1/c_1+1/c_2-1}}{(\log x)^{3/2 - \eps}}\),
$$
where the implied constant depends only on $c_1$, $c_2$ and $\eps$.
\end{conjecture}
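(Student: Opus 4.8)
The plan is to run the Lemke Oliver--Soundararajan model of prime gaps \cite{LO-S}, in the form Banks and Guo \cite{BaGu} applied it to Beatty sequences, with their residue/Beatty conditions replaced by membership in the two Piatetski-Shapiro sequences. Throughout put $\gamma_i=1/c_i\in(\tfrac12,1)$ and let $\psi(t)=t-\fl t-\tfrac12$. Since $p^\sharp-p$ is $(\log p)^{1+o(1)}$ on average and the contribution of atypically large gaps is negligible in the model, the first step is to truncate the gap,
\begin{equation*}
\pi(x;\cN^{(c_1)},\cN^{(c_2)})=\sum_{1\le h\le(\log x)^{2}}\#\bigl\{p\le x:\ p\in\cN^{(c_1)},\ p+h\in\cN^{(c_2)},\ (p,p+h)\ \text{consecutive primes}\bigr\}+O(1),
\end{equation*}
and then to encode ``consecutive'' by the inclusion--exclusion identity $\ind{p+b\ \text{composite},\,0<b<h}=\sum_{\cS\subseteq\{1,\dots,h-1\}}(-1)^{|\cS|}\ind{p+s\ \text{prime},\,s\in\cS}$. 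This reduces the task to understanding, for each fixed $\cH=\{0\}\cup\cS\cup\{h\}$, the quantity $\sum_{n\le x}\ind{\cN^{(c_1)}}(n)\,\ind{\cN^{(c_2)}}(n+h)\prod_{b\in\cH}\ind{n+b\ \text{prime}}$.

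The second step separates the sequence memberships from primality. Writing $\ind{\cN^{(c)}}(m)=\fl{-m^{\gamma}}-\fl{-(m+1)^{\gamma}}=\bigl((m+1)^{\gamma}-m^{\gamma}\bigr)+\bigl(\psi(-(m+1)^{\gamma})-\psi(-m^{\gamma})\bigr)$ and approximating $\psi$ by Vaaler's trigonometric polynomial, one replaces the oscillatory bracket by a finite combination of additive characters $\e(km^{\gamma})$, $1\le|k|\le K$, up to a controlled tail. Expanding $\ind{\cN^{(c_1)}}(n)\,\ind{\cN^{(c_2)}}(n+h)$ this way yields a smooth main part $\gamma_1\gamma_2\,n^{\gamma_1-1}(n+h)^{\gamma_2-1}$, two ``mixed'' parts carrying one character $\e(kn^{\gamma_1})$ or $\e(k(n+h)^{\gamma_2})$, and a doubly oscillatory part. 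Feed each of these into the quantitative Hardy--Littlewood conjecture
\begin{equation*}
\sum_{n\le x}w(n)\prod_{b\in\cH}\ind{n+b\ \text{prime}}=\fS(\cH)\int_2^x\frac{w(t)}{(\log t)^{|\cH|}}\dd t+(\text{lower order}),
\end{equation*}
applied (in the heuristic) even when $w$ is oscillatory: the smooth amplitude passes through unchanged, while for $w=\e(kt^{\gamma})$ the resulting quantity is negligible by standard exponential-sum estimates (van der Corput's method), precisely because $1<c_i<2$. Re-summing the smooth contribution over $\cS$ (the ``Poisson'' part of $\sum_{\cS}(-1)^{|\cS|}\fS(\{0\}\cup\cS\cup\{h\})(\log t)^{-|\cS|}$ collapsing to $\approx e^{-h/\log t}$) and then over $h$ (giving $\sum_h e^{-h/\log t}\asymp\log t$) reduces the leading term to an integral of the shape $\gamma_1\gamma_2\int_2^x t^{\gamma_1+\gamma_2-2}(\log t)^{-1}\dd t$, asymptotic to the main term displayed in Conjecture~\ref{conj:main}.

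The third and least routine step is the error term $O\bigl(x^{1/c_1+1/c_2-1}(\log x)^{-3/2+\eps}\bigr)$. It has two sources. The secondary terms of the Lemke Oliver--Soundararajan resummation enter only at the level $\log\log x/\log x$ relative to the main term and are absorbed harmlessly. The genuinely new contribution is the ``mixed'' one: there the singular series $\fS(\{0\}\cup\cS\cup\{h\})$ is averaged over $h$ against the character $\e\bigl(k(p+h)^{\gamma_2}\bigr)$, which, for $p$ in a dyadic block, behaves like $\e(\beta h)$ with $\beta\asymp k\gamma_2\,p^{\gamma_2-1}$. This is precisely the object of the proposition announced in the abstract: the weighted average $\sum_{h\le H}\fS(\{0,h\})\e(\beta h)$, together with its analogues for the larger tuples $\{0\}\cup\cS\cup\{h\}$, should save a power of $\log$ over the trivial bound $O(H)$, and carrying this saving through the double sum over $\cS$ and $h$ — while tracking the dependence on $p$ and finally summing $p\le x$ against the density $\gamma_1\,p^{\gamma_1-1}$ of $\cN^{(c_1)}$ — yields the exponent $3/2-\eps$. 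The main obstacle is this proposition: one needs the Fourier transform of the singular series understood sharply and uniformly for $\beta$ in a range comparable to $K\gamma_2 x^{\gamma_2-1}$, so as to extract cancellation between the oscillation $\e(\beta h)$ and the arithmetic fluctuations of $\fS$. As with all results in this circle, the derivation is conditional on a strong form of the Hardy--Littlewood conjecture, and is thus a heuristic rather than an unconditional theorem.
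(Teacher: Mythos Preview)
Your overall architecture matches the paper's: truncate the gap, encode ``consecutive'' by inclusion--exclusion, split $\ind{\cN^{(c_i)}}$ into the smooth density $\gamma_i m^{\gamma_i-1}$ plus Vaaler-approximated $\psi$-terms, feed each piece into the modified Hardy--Littlewood conjecture, and control the oscillatory pieces via the proposition on singular-series averages with exponential weight. But your third paragraph inverts the attribution of the $(\log x)^{-3/2+\eps}$ error, and this matters for the actual execution.

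In the paper the smooth piece $\gamma_1\gamma_2\,n^{\gamma_1-1}(n+h)^{\gamma_2-1}$ is \emph{itself} the source of the $3/2-\eps$ exponent. After the Lemke Oliver--Soundararajan resummation, the $L=2$ terms with one endpoint and one interior point $t\in[1,h-1]$ produce $\sum_{t<h}\fS_0(\{0,t\})\ll h^{1/2+\eps}$ (Lemma~\ref{lem:G0ests}); summing over $h$ with weight $\nu(u)^h$ gives $(\log u)^{1/2+\eps}$, so these secondary terms sit at $(\log u)^{-1/2+\eps}$ relative to the main term --- they are \emph{not} absorbed at level $\log\log x/\log x$. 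Conversely, the mixed and doubly oscillatory pieces are shown to be $O\bigl(x^{\gamma_1+\gamma_2-1}(\log x)^{-2}\bigr)$, strictly smaller: Proposition~\ref{lem:RST} gives the bound $|k|^{-4}$ (resp.\ $|jk|^{-4}$) for $\sum_h \fS_0(\{0,h\})\nu(u)^h\,c(\cdot)\,\e(k(u+h)^{\gamma_2})$, which makes the $k$-sum convergent. Two further points where your sketch would not go through as written: the paper does \emph{not} linearize the phase to $\e(\beta h)$ --- it remarks that the linear case of \cite{BaGu} is insufficient here, and instead absorbs the nonlinear remainder $k(u+h)^{\gamma_2}-kh$ into an arbitrary bounded coefficient $c(j,k,u,h)$ in the proposition; and van der Corput applied directly to $\sum_n f_h(n)\,\e(kn^{\gamma})$ is not viable, since $f_h$ is supported on the very sparse set of primes with exact gap $h$ --- the paper first passes through Hardy--Littlewood to an integral in $u$ and only then extracts cancellation in the $h$-sum via the proposition.
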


\textbf{Remark.}
The case that $c_1 = c_2$ is easy to be excluded. By the well-believed Cram\'er's model, it follows that $p_{n+1} - p_n \ll \log^2 p_n$. If Conjecture~\ref{conj:main} holds for the case when $c_1 = c_2 = c$, let $p_{n+1}= \fl{n_1^c}, p_n = \fl{n_2^c}, c > 1.$ We derive that 
$$
p_{n+1}- p_n = \fl{n_1^c} - \fl{n_2^c} > n_1^c - 1 - n_2^c \ge (n_2+1)^c - 1 - n_2^c \gg n_2^{c-1}.
$$ 
Since $n_2 \sim {p_n}^{1/c}$, it gives that $p_{n+1} - p_n \gg {p_n}^{\frac{c-1}{c}}$, which conflicts the Cram\'er's model. 

\vspace{3cm}

In Section \ref{sec:5}, we also give a sketch of a more general conjecture.

\begin{conjecture}
\label{conj:0}
Given any fixed positive number $\eps > 0$ and $c_i \neq c_j$ for $1 \le i < j \le r$, the counting function \eqref{eq:pic} satisfies that
$$
\pi(x;\mathbf{c}) = \frac{x^{1/c_1 + \dots + 1/c_r - r + 1}}{c_1 \cdots c_r \log x} + O\bigg( \frac{x^{1/c_1 + \dots + 1/c_r - r + 1}}{(\log x)^{3/2-\eps}} \bigg),
$$
where the implied constant depends only on $c_1$, \dots, $c_r$ and $\eps$.
\end{conjecture}

In what follows we give a short survey of the breakthrough of Lemke Oliver and Soundararajan's biases. We will end the introduction by the main proposition and key improvement to this topic. 

\subsection{The Hardy-Littlewood conjecture}
  Let $\cH$ be a finite subset of $\ZZ$, and let $\ind{\PP}$ denote the indicator function of the primes. A strong form of the Hardy-Littlewood conjecture for $\cH$ asserts that the estimate
\begin{equation}
	\label{eq:HL}
	\sum_{n\le x}\prod_{h\in\cH}\ind{\PP}(n+h) =\fS(\cH)\int_2^x\frac{du}{(\log u)^{|\cH|}} + O(x^{1/2+\eps})
\end{equation}
holds for every fixed $\eps>0$, where $\fS(\cH)$ is the singular series given by
$$
\fS(\cH)\defeq\prod_p\bigg(1-\frac{|(\cH\bmod p)|}p\bigg)
\bigg(1-\frac1p\bigg)^{-|\cH|}.
$$
For their work on primes in short intervals, Montgomery and Soundararajan \cite{MontSound} have introduced the modified singular series
$$
\fS_0(\cH) \defeq \sum_{\cT \subseteq \cH} (-1)^{|\cH \setminus \cT|} \fS(\cT),
$$
for which one has the relation
$$
\fS(\cH)=\sum_{\cT\subseteq\cH}\fS_0(\cT).
$$
Note that $\fS(\varnothing)=\fS_0(\varnothing)=1$. The Hardy-Littlewood conjecture \eqref{eq:HL} can be reformulated in terms of the modified singular series as follows:
\begin{equation}
\label{eq:modHL}
\sum_{n\le x}\prod_{h\in\cH} \bigg(\ind{\PP}(n+h)-\frac{1}{\log n}\bigg) =\fS_0(\cH)\int_2^x\frac{du}{(\log u)^{|\cH|}} + O(x^{1/2+\eps}).
\end{equation}

\subsection{A modified Hardy-Littlewood conjecture with congruence conditions}

To investigate the distribution of primes in arithmetic progressions, we introduce a modification of the Hardy-Littlewood conjecture with congruence conditions $(\bmod \, q)$ from Lemke Oliver and Soundararajan's model~\cite{LO-S}. For any integer $q \ge 1$ and a finite subset $\mathcal{H} \subset \mathbb{Z}$, define the singular series away from $q$ by
$$
\mathfrak{S}_q(\mathcal{H}) \defeq \prod_{p \nmid q} \bigg( 1- \frac{|(\mathcal{H} \bmod p)|}{p} \bigg) \bigg( 1 - \frac{1}{p} \bigg)^{-|\mathcal{H}|}.
$$
We require that $a \pmod{q}$ is such that $(h+a, q) = 1$ for all $h \in \mathcal{H}$, then it asserts that
$$
\sum_{\substack{n < x \\ n \equiv a \pmod{q}}} \prod_{h \in \mathcal{H}} \mathbf{1}_{\PP} (n+h) \sim \mathfrak{S}_q(\mathcal{H}) \bigg( \frac{q}{\varphi(q)} \bigg)^{|\mathcal{H}|} \frac{1}{q} \int_2^x \frac{dy}{(\log y)^{|\mathcal{H}|}}.
$$
Now similar to $\mathfrak{S}_0$, define
$$
\mathfrak{S}_{q,0}(\mathcal{H}) \defeq \sum_{\cT \subset \cH} (-1)^{|\cH \backslash \cT|} \fS_q(\cT),
$$
which gives that
$$
\fS_q(\mathcal{H}) = \sum_{\cT \subset \cH} \fS_{q,0}(\cT).
$$
Conditioning $(h+a,q) =1$ for all $h \in \cH$, we expect that
\begin{equation}
\label{eq:apHL}
\sum_{\substack{n < x \\ n \equiv a \pmod{q}}} \prod_{h \in \mathcal{H}} \bigg( \mathbf{1}_{\PP} (n+h) -\frac{q}{\varphi(q) \log n} \bigg) \sim \mathfrak{S}_{q,0}(\mathcal{H}) \bigg( \frac{q}{\varphi(q)} \bigg)^{|\mathcal{H}|} \frac{1}{q} \int_2^x \frac{dy}{(\log y)^{|\mathcal{H}|}}.
\end{equation}
The term $q/(\varphi(q) \log n)$ is expected to be the probability that $n+h$ is prime, given that $(n+h, q) = 1$.

\subsection{Lemke Oliver and Soundararajan's method}

Based on a model by assuming a modified version of the Hardy-Littlewood conjecture~\eqref{eq:apHL}, Lemke Oliver and Soundararajan~\cite{LO-S} conjectured that
$$
\pi(x;q,\mathbf{a}) = \frac{\text{li}(x)}{\varphi(q)^r} \( 1 + c_1(q; \mathbf{a}) \frac{\log\log x}{\log x} + c_2(q;\mathbf{a}) \frac{1}{\log x} + O( (\log x)^{-7/4}) \),
$$
where $c_1(q; \mathbf{a})$ and $c_2(q;\mathbf{a})$ are explicit constants. 

They rewrited the sum of the characteristic function 
$$
\sum_{\substack{n \le x \\ n \equiv a \pmod{q}}} \mathbf{1}_{\PP}(n) \mathbf{1}_{\PP}(n+h) \prod_{\substack{0 < t < h \\ (t+a, q) = 1}} (1 - \mathbf{1}_{\PP}(n+t))
$$
into a sum related to singular series and achieved that
$$
\pi(x;q,(a,b)) \sim \frac{1}{q} \int_2^x {\alpha(y)}^{\epsilon_q(a,b)} \bigg( \frac{q}{\varphi(q) \log y} \bigg)^2 \cD(a,b;y) dy
$$
where $\cD(a,b;y)$ is a sum depending on the average of singular series. The key point to analyze $\cD(a,b;y)$ is a detailed estimation of 
$$
\sum_{\substack{h > 0 \\ h \equiv v \pmod{q}}} \mathfrak{S}_{q,0} (\{0,h\}) e^{-h/H},
$$
which was calculated as the main proposition in \cite{LO-S}. 

\subsection{Key proposition of this article}

To complete the heuristic of Conjecture~\ref{conj:main}, the key point is to analyze the average of singular series with a weight of exponential functions which differs from the case in \cite{LO-S}. Let $\nu(u) = 1 - 1/\log u$. By Lemke Oliver and Soundararajan's idea~\cite{LO-S}, one can estimate the following expression
$$
\sum_{\substack{h \ge 1 \\ 2|h}} \mathfrak{S}_0(\{0,h\}) \nu(u)^h.
$$
Since the counting function of the Piatetski-Shapiro sequence requires us to express the fractional part of a function into a sum of exponential sums, we need to estimate 
\begin{equation}
\label{eq:s}
\sum_{\substack{h \ge 1 \\ 2|h}} \mathfrak{S}_0(\{0,h\}) \nu(u)^h \e(f(h,u)),
\end{equation}
where the function $f(h,u)$ is ``smooth". The case when the function $f(h,u)$ is linear was estimated in \cite{BaGu}, but the method has to be revised to adapt the smooth case~\eqref{eq:s}. One can compare the following proposition to Lemma~2.4 in \cite{BaGu}. A detailed proof is in Section \ref{sec:4}. 

\begin{proposition}
\label{lem:RST}
Fix $\theta\in[0,1]$ and $\vartheta=0$ or $1$. Let $\gamma_1, \gamma_2 \in (0,1)$ be two real numbers. For all $j, k \in\RR$ and $u\ge 3$, let $c(j,k,u,h)$ be a complex number with $|c(j,k,u,h)| = 1$ and $c(j,k,u,h) = 1$ if $j=k=0$. We define
\begin{align*}
R_{\theta,\vartheta;j,k}(u)&\defeq \sum_{\substack{h\ge 1\\2\,\mid\,h}} h^\theta(\log h)^\vartheta\nu(u)^h c(j,k,u,h) \e(j u^{\gamma_1} + k (u+h)^{\gamma_2}),\\
S_{j,k}(u)&\defeq \sum_{\substack{h\ge 1\\2\,\mid\,h}} \fS_0(\{0,h\})\nu(u)^h c(j,k,u,h) \e(j u^{\gamma_1} + k (u+h)^{\gamma_2}).
\end{align*}
  When $j=k=0$ we have the estimates
  \begin{align*}
  R_{\theta,0;0,0}(u)&=\tfrac12\Gamma(1+\theta) (\log u)^{1+\theta}+O(1),\\
  R_{\theta,1;0,0}(u)&=\tfrac12(\log 2)\Gamma(1+\theta) (\log u)^{1+\theta}+O(1),\\
  S_{0,0}(u)&=\tfrac12\log u-\tfrac12\log\log u+O(1).
  \end{align*}
  On the other hand, if $k$ is such that $|k|\ge(\log u)^{-1}$, then
  $$
  \max\big\{|R_{\theta,\vartheta;j,k}(u)|, |S_{j,k}(u)|\big\} \ll |k|^{-4}.
  $$
  If $j,k$ are such that $|jk| \ge (\log u)^{-1}$, then
  \begin{equation}
  \label{eq:lemext}
  \max\big\{|R_{\theta,\vartheta;j,k}(u)|, |S_{j,k}(u)|\big\} \ll |jk|^{-4}.
  \end{equation}
  \end{proposition}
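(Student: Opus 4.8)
The plan is to handle the \emph{diagonal} case $j=k=0$, where the exponential factor is absent and each series is one of positive reals, separately from the \emph{off-diagonal} case, where the bound has to come from cancellation in $\e\bigl(k(u+h)^{\gamma_2}\bigr)$. A preliminary observation useful for both is that $\nu(u)^{h}=\exp(-\lambda(u)h)$ with $\lambda(u)=-\log\nu(u)=(\log u)^{-1}+O((\log u)^{-2})$, so the weight decays geometrically, every series converges absolutely, and one may truncate at $h=(\log u)^{2}$ with error $O(u^{-1})$; I would also note that $\e(ju^{\gamma_1})$ is a unimodular constant in $h$, hence harmless for the absolute-value estimates and relevant only — through a symmetric argument in $j$ — to the joint bound \eqref{eq:lemext}.

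For $j=k=0$ the work is computational. Substituting $h=2m$ rewrites $R_{\theta,\vartheta;0,0}(u)$ as $2^{\theta}\sum_{m\ge1}m^{\theta}(\log2+\log m)^{\vartheta}q^{m}$ with $q=\nu(u)^{2}$; for $\vartheta=0$ this is the polylogarithm $\mathrm{Li}_{-\theta}(q)$, and for $\vartheta=1$ it is $\log2$ times that sum together with a parameter-derivative of it. I would then invoke the classical small-$t$ expansion of $\mathrm{Li}_{-\theta}(e^{-t})$ — obtained by Mellin inversion, or by Euler--Maclaurin applied to $\sum_{m}m^{\theta}e^{-tm}$ — with $t=-\log q\sim 2/\log u$, to isolate the leading term $\tfrac12\Gamma(1+\theta)(\log u)^{1+\theta}$ (carrying the extra $\log2$ when $\vartheta=1$) and then the lower-order contributions. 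For $S_{0,0}(u)$ I would instead use $\fS_{0}(\{0,h\})=\fS(\{0,h\})-1$ for even $h$ and $\fS(\{0,h\})=0$ for odd $h$, feed in the classical Montgomery--Soundararajan-type average $\sum_{h\le H,\,2\mid h}\fS_{0}(\{0,h\})=\tfrac12H-\tfrac12\log H+O(1)$, and perform a partial summation against $\nu(u)^{h}$; the geometric localisation of the weight turns $\tfrac12H$ into $\tfrac12\log u$ and $-\tfrac12\log H$ into $-\tfrac12\log\log u$, which yields the claimed value up to $O(1)$.

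For the off-diagonal bounds I would Taylor-expand
\[
(u+h)^{\gamma_2}=u^{\gamma_2}+\gamma_2 u^{\gamma_2-1}h+O\bigl(u^{\gamma_2-2}h^{2}\bigr),
\]
absorb the constant $u^{\gamma_2}$ into the $h$-independent unimodular factor, and bound the quadratic remainder on the effective range $h\ll(\log u)^{1+o(1)}$; this reduces matters, up to an admissible error, to a linear phase $\e(\beta h)$ with $\beta=k\gamma_2 u^{\gamma_2-1}$, i.e.\ to the framework of Lemma~2.4 of \cite{BaGu}. Writing $w(h)$ for the relevant smooth decaying weight — $h^{\theta}(\log h)^{\vartheta}\nu(u)^{h}$ or $\fS_{0}(\{0,h\})\nu(u)^{h}$ — I would estimate $\sum_{2\mid h}w(h)\,c(j,k,u,h)\,\e(\beta h)$ by iterated summation by parts against the oscillatory factor: the partial sums of $\e(\beta h)$ along even $h$ are $O(\min\{h,\|\beta\|^{-1}\})$, and repeating the device, together with the control on the consecutive differences of $w$ and of $c$ in $h$, delivers the saving $\ll|k|^{-4}$; applying it also to $\e(ju^{\gamma_1})$ in the full expression yields \eqref{eq:lemext}.

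I expect the main obstacle to be making the off-diagonal decay uniform over the whole range $|k|\ge(\log u)^{-1}$. For the larger values of $|k|$, where $|\beta|$ exceeds a fixed multiple of $(\log u)^{-1}$, the linear phase genuinely oscillates and the summation-by-parts mechanism is efficient — but one must simultaneously keep the higher Taylor coefficients of $(u+h)^{\gamma_2}$ and the $h$-variation of $c(j,k,u,h)$ under control so they do not erode the gain; for the smaller values of $|k|$ the phase is nearly frozen over the effective support of the weight, so the decay in $|k|$ must be teased out more delicately, using the fine structure of $c(j,k,u,h)$. Reconciling the two regimes cleanly, and separately pinning down the sharp $O(1)$ error in the diagonal case — which requires the full lower-order expansions of the polylogarithms and of the singular-series average — is the delicate part.
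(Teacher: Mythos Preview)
Your diagonal case $j=k=0$ is fine and essentially what the paper does: it simply defers to \cite[Lemma~2.4]{BaGu}, which is proved by the same Mellin/polylogarithm asymptotics and partial summation against the singular-series average that you outline.

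The off-diagonal strategy has a genuine gap. Your linearised frequency is $\beta=k\gamma_2 u^{\gamma_2-1}$, and since $\gamma_2-1<0$ this is \emph{extremely} small: on the effective support $h\ll(\log u)^{1+o(1)}$ of $\nu(u)^h$ the total phase variation is $\asymp|k|\,u^{\gamma_2-1}(\log u)^{1+o(1)}\to 0$ as $u\to\infty$ for any fixed $k$. The phase therefore does not complete even a fraction of one oscillation over the support of the sum, so your partial-sum bound $\min\{h,\|\beta\|^{-1}\}$ is just $h$, and no iteration of summation by parts can extract a power of $|k|^{-1}$, let alone $|k|^{-4}$. A second obstruction is that the hypotheses give only $|c(j,k,u,h)|=1$, with no control whatsoever on its $h$-differences; any Abel-summation scheme that differences the summand in $h$ is destroyed by an adversarial choice of $c$. (Your closing remark, applying the device to $\e(ju^{\gamma_1})$, cannot help either: that factor is independent of $h$.)

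The paper's route is quite different and, crucially, never differences anything in $h$. It writes the tautology $\nu(u)^h=\nu(u)^h\e(kh)\cdot\e(-kh)$, sets $H_k\defeq H/(1-2\pi ikH)$ so that $\nu(u)^h\e(kh)=e^{-h/H_k}$, and throws the leftover unimodular factor $\e\bigl(k(u+h)^{\gamma_2}-kh\bigr)$ together with $c(j,k,u,h)$ into a single $f$ of modulus $1$. The Cahen--Mellin representation $e^{-h/H_k}=\tfrac1{2\pi i}\int_{(4)}\Gamma(s)H_k^s h^{-s}\,ds$ then gives
\[
R_{\theta,\vartheta;j,k}(u)=\frac1{2\pi i}\int_{(4)}\Gamma(s)H_k^s\Bigl(\sum_{2\mid h}h^{\theta-s}(\log h)^\vartheta f(j,k,u,h)\Bigr)ds,
\]
and on $\Re s=4$ the bracketed Dirichlet series is bounded trivially by an absolute constant; the entire saving is then read off from $|H_k|^4=\bigl(H^2/(1+4\pi^2k^2H^2)\bigr)^2\ll|k|^{-4}$ once $|k|\ge(\log u)^{-1}$. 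The same scheme with $F(s)=\sum_{h\ge1}\fS(\{0,h\})h^{-s}$ handles $S_{j,k}$, and taking $jk$ in place of $k$ gives \eqref{eq:lemext}. So the $|k|^{-4}$ in the paper does not arise from oscillation of $\e\bigl(k(u+h)^{\gamma_2}\bigr)$ in $h$ at all, but from the freedom to insert a complex decay parameter $H_k$ in the Mellin integral.
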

  
 \subsection{Remark to the main term of Conjecture \ref{conj:main}}
\label{sec:1.5}

The main conjectures by Lemke Oliver and Soundararajan \cite{LO-S} provide a secondary main term, which makes the prediction more accurate and also show the unexpected biases. However, our conjectures only prove a main term although we apply a similar model. This is because of the construction of our key Proposition~\ref{lem:RST}. 

Lemke Oliver and Soundararajan only need to consider $R_{\theta,\vartheta;j,k}(u)$ and $S_{j,k}(u)$ when $j = k =0$, which can be estimated by a careful calculation by L-functions and so on. In our problems, the way to construct Piatetski-Shapiro sequence is by the method of exponential sums. Whenever the sum is twisted by the term
$$
c(j,k,u,h) \e(j u^{\gamma_1} + k (u+h)^{\gamma_2}),
$$
we can hardly give an asymptotic formula instead of an upper bound. Therefore, unfortunately by the method in this article, we are not able to provide a secondary main term in our conjectures as the previous result.

We mention that David, Devin, Nam and Schlitt \cite{DDNS} also gave a conjecture with a secondary main term, since they formulated a new Hardy-Littlewood conjecture in arithmetic progressions for sum of two squares. If we also assume a Hardy-Littlewood conjecture for Piatetski-Shapiro primes, it may be possible to address a more accurate conjecture for consecutive Piatetski-Shapiro primes, which could be considered as a future topic. 

\section{Preliminaries}

\subsection{Notation}

We denote by $\fl{t}$ and $\{t\}$ the greatest integer $\le t$ and the fractional part of $t$, respectively. We also write $\e(t)\defeq e^{2\pi it}$ for all $t\in\RR$, as usual. We make considerable use of the sawtooth function defined by
$$
\psi(t) \defeq t-\fl{t}-\tfrac12=\{t\}-\tfrac12\qquad(t\in\RR).
$$

Let $\PP$ denote the set of primes in $\NN$. In what follows, the letter $p$ always denotes a prime number. Let $\gamma \defeq c^{-1}, \gamma_1 \defeq {c_1}^{-1}$ and $\gamma_2 \defeq {c_2}^{-1}$. Throughout the paper, $\varepsilon$ is always a sufficiently small positive number. 

For an arbitrary set $\cS$, we use $\ind{\cS}$ to denote its indicator function:
$$
\ind{\cS}(n)\defeq\begin{cases}
	1&\quad\hbox{if $n\in\cS$,}\\
	0&\quad\hbox{if $n\not\in\cS$,}\\
\end{cases}
$$
and let $\mathbf{1}^{(c)}(n) \defeq \ind{\cN^{(c)}}(n)$.

Throughout the paper, implied constants in symbols $O$, $\ll$ and $\gg$ may depend (where obvious) on the parameters $\gamma_1,\gamma_2,\eps$ but are absolute otherwise. For given functions $F$ and $G$, the notations $F\ll G$, $G\gg F$ and
$F=O(G)$ are all equivalent to the statement that the inequality $|F|\le c|G|$ holds with some constant $c>0$.

\subsection{Technical lemmas}

We start by a simple average estimation of singular series. 

\begin{lemma}
\label{lem:G0ests}
Let $h$ be a positive integer. We have
\begin{align*}
\sum_{\substack{1\le t\le h-1}}\fS_0(\{0,t\}) &\ll h^{1/2+\eps},\\
\sum_{\substack{1\le t\le h-1}}\fS_0(\{t,h\}) &\ll h^{1/2+\eps},\\
\sum_{\substack{1\le t_1<t_2\le h-1}}\fS_0(\{t_1,t_2\}) &=-\tfrac12 h\log h+\tfrac12Ah+O(h^{1/2+\eps}),
\end{align*}
where $A\defeq 2-C_0-\log 2\pi$ and $C_0$ denotes the Euler-Mascheroni constant.
\end{lemma}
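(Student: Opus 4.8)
The plan is to reduce all three sums to averages of the two-point singular series $\fS(\{0,d\})$. Since $\fS$ is translation invariant and $\fS(\{a\})=\fS(\varnothing)=1$, inclusion-exclusion gives $\fS_0(\{a,b\})=\fS(\{0,|a-b|\})-1$ whenever $a\neq b$. Writing $g(d)\defeq\fS_0(\{0,d\})=\fS(\{0,d\})-1$, the first sum is $\sum_{1\le t\le h-1}g(t)$, the second sum is the same after the change of variable $t\mapsto h-t$, and the third sum equals $\sum_{d=1}^{h-2}(h-1-d)\,g(d)$, since there are exactly $h-1-d$ pairs $(t_1,t_2)$ with $1\le t_1<t_2\le h-1$ and $t_2-t_1=d$. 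So it suffices to estimate $\sum_{t\le x}g(t)$ and $\sum_{d\le x}(x-d)g(d)$.

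The basic tool is the Dirichlet-convolution identity $\fS(\{0,d\})=\sum_{q\mid d}\mathfrak c(q)$, where $\mathfrak c$ is supported on $q=2q'$ with $q'$ odd and squarefree and $\mathfrak c(2q')=2C_2/\varphi_2(q')$, with $\varphi_2(q')\defeq\prod_{p\mid q'}(p-2)$ and $C_2$ the twin-prime constant; this encodes at once that $\fS(\{0,d\})=0$ for odd $d$ and the usual product formula for even $d$. I would use: the bound $\varphi_2(q')\gg_\eps q'^{1-\eps}$, which makes all tails of the relevant $q'$-sums power-saving; the evaluation $\sum_q\mathfrak c(q)/q=C_2\prod_{p>2}\bigl(1+\tfrac1{p(p-2)}\bigr)=1$; and, for the logarithmic term, $\sum_{q'\le x}1/\varphi_2(q')=\tfrac1{2C_2}\log x+\mu_0+O(x^{-1/2+\eps})$, where the factor $\tfrac12$ comes from the $p=2$ Euler factor of $\zeta$. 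Equivalently, $\sum_q\mathfrak c(q)q^{-s}=2^{1-s}C_2H_0(s)$ with $H_0(s)=\prod_{p>2}\bigl(1+\tfrac1{(p-2)p^s}\bigr)$ analytic for $\Re s>0$, having a simple pole at $s=0$ of residue $\tfrac1{2C_2}$ and continuing a little past $\Re s=0$ after extracting suitable $\zeta$-factors.

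For the first two sums, interchanging summation gives $\sum_{t\le x}\fS(\{0,t\})=\sum_{q\le x}\mathfrak c(q)\fl{x/q}$; removing the floor costs $O\bigl(\sum_{q\le x}\mathfrak c(q)\bigr)=O(\log x)$, and then $\sum_q\mathfrak c(q)/q=1$ together with the tail bound give $\sum_{t\le x}\fS(\{0,t\})=x+O(x^{1/2+\eps})$, hence $\sum_{t\le x}g(t)\ll x^{1/2+\eps}$; this is the first (hence also the second) assertion. For the third sum I would again interchange summation, $\sum_{d\le h-2}(h-1-d)\fS(\{0,d\})=\sum_q\mathfrak c(q)\sum_{m\le(h-2)/q}(h-1-qm)$, and evaluate the inner sum exactly: it is a quadratic in $h/q$ plus a remainder of size $O(q)$ depending on $\{(h-1)/q\}$. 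Summing over $q$ splits this into a $\tfrac{(h-1)^2}{2}\sum\mathfrak c(q)/q$ piece, equal to $\tfrac{(h-1)^2}{2}$ plus a linear term (here the tail of $\sum\mathfrak c(q)/q$ has to be known to precision $O(h^{-3/2+\eps})$, which is what the continuation of $H_0$ provides), a $-\tfrac{h-1}{2}\sum_{q\le h-2}\mathfrak c(q)$ piece, which is $\asymp(h-1)\log h$ and, via the asymptotic for $\sum1/\varphi_2$, produces the main term $-\tfrac12 h\log h$ plus another linear term, and a remainder of size $O(h)$. Subtracting $\sum_{d=1}^{h-2}(h-1-d)=\binom{h-1}{2}$ kills the quadratic part, and collecting the linear contributions --- using $\zeta(0)=-\tfrac12$, $\zeta'(0)=-\tfrac12\log 2\pi$, and the Euler-type constant coming from $\sum1/q'$ --- yields the coefficient $\tfrac12(2-C_0-\log 2\pi)=\tfrac12A$. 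This last computation is in essence the one carried out by Montgomery and Soundararajan \cite{MontSound} in their work on primes in short intervals, so the third estimate may also simply be quoted from there after the reindexing above.

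The step I expect to be hardest is the constant bookkeeping in that final stage. Forcing the linear coefficient to be exactly $\tfrac12(2-C_0-\log 2\pi)$ requires the secondary terms of several auxiliary asymptotics to be computed correctly --- in particular the $p=2$ Euler factors (this is precisely why the coefficient of $h\log h$ is $-\tfrac12$ rather than $-1$), the $\{(h-1)/q\}$-weighted sums, and the constant from $\sum1/q'$ near $s=1$ --- while at the same time every error must be kept at the $O(h^{1/2+\eps})$ level, which is exactly the strength afforded by $\varphi_2(q')\gg q'^{1-\eps}$ and the standard convexity bound for $\zeta$ on the line $\Re s=\tfrac12+\eps$.
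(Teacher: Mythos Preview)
Your argument is correct and is essentially the standard derivation of these estimates. The paper does not actually prove the lemma; it simply cites \cite[Lemma~2.2]{BaGu}. Your reductions---writing $\fS_0(\{a,b\})=\fS(\{0,|a-b|\})-1$, observing that the second sum equals the first after $t\mapsto h-t$, and reindexing the third sum as $\sum_{d=1}^{h-2}(h-1-d)\,g(d)$---are exactly right, and the Dirichlet-convolution identity $\fS(\{0,d\})=\sum_{q\mid d}\mathfrak c(q)$ together with $\sum_q\mathfrak c(q)/q=1$ and the asymptotic for $\sum_{q'\le x}1/\varphi_2(q')$ is the standard route to the first two bounds. For the third estimate you are also right that this is precisely the second-moment computation of Montgomery and Soundararajan \cite{MontSound}; once you have rewritten the sum as $\sum_{d<h-1}(h-1-d)(\fS(\{0,d\})-1)$, the result there (after the trivial shift $h\mapsto h-1$, which is absorbed in the error) gives the asserted main terms with $A=2-C_0-\log 2\pi$. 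Your caution about the constant bookkeeping is well placed, but since the paper is content to quote the result, you may do the same and simply cite \cite{MontSound} or \cite{BaGu} for the linear coefficient rather than recomputing it.
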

\begin{proof}
See~\cite[Lemma~2.2]{BaGu}.
\end{proof}

Recall that
$$
\nu(u) = 1-\frac1{\log u}\qquad(u>1).
$$
Note that $\nu(u)$ is the same as $\alpha(u)$ in the notation of \cite{LO-S} and $\nu(u) \asymp 1$ if $u \ge 3$. 

\begin{lemma}
\label{lem:Dom}
Let $c > 0$ be a constant, and suppose that $f$ is a function such that $|f(h)|\le h^c$ for all $h\ge 1$. Then, uniformly for $3\le u\le x$ and a real function $g(u,h) \in\RR$ we have
$$
\sum_{\substack{h\le\log x\\2\,\mid\,h}} f(h)\nu(u)^h\e(g(u,h)) =\sum_{\substack{h\ge 1\\2\,\mid\,h}} f(h)\nu(u)^h\e(g(u,h))+O_c(x^{-1}).
$$
\end{lemma}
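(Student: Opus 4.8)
The plan is to observe that the left-hand sum is just the right-hand sum with its tail $h>(\log x)^3$ removed, and then to show that this tail is absorbed into $O_c(x^{-1})$ because the weight $\nu(u)^h$ decays geometrically at a rate that, once $h$ exceeds $(\log x)^3$, beats every fixed power of $x$.

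Concretely, the steps are as follows. First, since $g(u,h)\in\RR$ we have $|\e(g(u,h))|=1$, so the hypothesis $|f(h)|\le h^c$ bounds the tail in absolute value by $\sum_{h>(\log x)^3}h^c\nu(u)^h$ (discarding the parity restriction only enlarges the sum). Second, for $3\le u\le x$ one has $0<\nu(u)\le\nu(x)=1-\tfrac1{\log x}\le e^{-1/\log x}$ — monotonicity of $1-1/\log t$ on $[3,\infty)$ together with positivity since $\log u>1$ — hence $\nu(u)^h\le e^{-h/\log x}$ uniformly in $u$. Third, compare with an integral: once $(\log x)^2\gg c$ the integrand $t^c e^{-t/\log x}$ is decreasing on $[(\log x)^3,\infty)$, so
$$
\sum_{h>(\log x)^3}h^c e^{-h/\log x}\ll\int_{(\log x)^3}^\infty t^c e^{-t/\log x}\,dt=(\log x)^{c+1}\,\Gamma\!\big(c+1,(\log x)^2\big)\ll_c(\log x)^{3c+1}e^{-(\log x)^2},
$$
by the standard tail asymptotics of the incomplete Gamma function. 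Finally, $e^{-(\log x)^2}=x^{-\log x}$ decays faster than any fixed power of $x$, so the tail is $\ll_c(\log x)^{3c+1}x^{-\log x}\ll_c x^{-1}$ for all large $x$; on the remaining bounded range of $x$ (where the incomplete-Gamma step does not directly apply) the full series $\sum_h h^c\nu(u)^h$ is $O_c(1)$ because $\nu(u)\le\nu(x)$ is there bounded away from $1$, and $O_c(1)=O_c(x^{-1})$ since $x\ge 3$. Together these give the asserted identity.

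There is no genuine obstacle here — it is a routine tail estimate — and the only points meriting a moment's care are (i) using $u\le x$ to obtain the uniform bound $\nu(u)\le\nu(x)$, which is precisely why the truncation level is phrased in terms of $x$ rather than $u$; and (ii) noting that $\nu(x)^{(\log x)^3}\le e^{-(\log x)^2}$ is super-polynomial in $x$, so it dominates not only the polynomial factor $h^c$ inherited from $|f(h)|\le h^c$ but an extra factor of $x$ as well, which is what yields the error $O_c(x^{-1})$ (in fact much smaller) rather than merely $o(1)$.
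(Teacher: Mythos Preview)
Your proof is correct and follows essentially the same approach as the paper: bound the tail in absolute value using $|\e(\cdot)|=1$, $|f(h)|\le h^c$, and the uniform estimate $\nu(u)^h\le e^{-h/\log x}$ coming from $u\le x$, then observe that once $h>(\log x)^3$ the exponential decay is super-polynomial in $x$. The only cosmetic difference is in the final bookkeeping: the paper writes $h/H\ge h^{2/3}$ and extracts the factor $x^{-1}$ via $x\le e^{h^{1/3}}$ to land on the convergent series $\sum_h h^c e^{h^{1/3}-h^{2/3}}$, whereas you pass to the integral and invoke the incomplete Gamma asymptotic $\Gamma(c+1,(\log x)^2)\ll_c(\log x)^{2c}e^{-(\log x)^2}$; both routes arrive at the same conclusion.
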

\begin{proof}
Let $H \defeq -(\log \nu(n))^{-1}$. We write $\nu(u)^h = e^{-h/H}$. By $H \le \log u$ for $u \ge 3$, $h > \log x$, we conclude that $h/H \ge h^{2/3}$ with $u \le x$. Hence
\begin{align*}
&\bigg| \sum_{\substack{h > \log x\\2\,\mid\,h}} f(h)\nu(u)^h\e(g(u,h)) \bigg| \\ &\quad \le \sum_{h > \log x} h^c e^{-h^{2/3}} \le x^{-1} \sum_{h > \log x} h^c e^{h^{1/3}-h^{2/3}} \ll_c x^{-1}.
\end{align*}
\end{proof}

\begin{lemma}
\label{lem:fh}
Assuming the Hardy-Littlewood conjecture~\eqref{eq:modHL}, let $h \le \log x$ and write
$$
f_h(n) \defeq \ind{\PP}(n)\ind{\PP}(n+h) \prod_{0<t<h}\big(1-\ind{\PP}(n+t)\big).
$$
Let
\begin{equation}
\label{eq:Sh}
S_h(x)\defeq\sum_{n\le x}f_h(n).
\end{equation}
For every integer $L\ge 0$ we denote
\begin{equation}
\label{eq:duck}
\cD_{h,L}(u)\defeq \mathop{\sum_{\cA\subseteq\{0,h\}}\sum_{\cT\subseteq[1,h-1]}} \limits_{(|\cA|+|\cT|=L)} (-1)^{|\cT|} \fS_0(\cA\cup\cT) (\nu(u)\log u)^{-|\cT|} \nu(u)^h.
\end{equation}
Then
$$
S_h(x)=\sum_{L=0}^{h+1}\int_3^x\nu(u)^{-1}(\log u)^{-2} \cD_{h,L}(u)\,du+O(x^{1/2+\eps}).
$$
\end{lemma}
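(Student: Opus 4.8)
The plan is to open the indicator product defining $f_h$ by an exact inclusion--exclusion, arrange each resulting sum so that the modified Hardy--Littlewood conjecture~\eqref{eq:modHL} applies after partial summation, and then collect the main terms into the quantities $\cD_{h,L}$. For $3\le n\le x$ I would write each endpoint factor as $\ind{\PP}(n)=\tfrac{1}{\log n}+\bigl(\ind{\PP}(n)-\tfrac{1}{\log n}\bigr)$ and $\ind{\PP}(n+h)=\tfrac{1}{\log n}+\bigl(\ind{\PP}(n+h)-\tfrac{1}{\log n}\bigr)$, and each interior factor as $1-\ind{\PP}(n+t)=\nu(n)-\bigl(\ind{\PP}(n+t)-\tfrac{1}{\log n}\bigr)$. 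Multiplying out, and using that any $\cA\subseteq\{0,h\}$ is disjoint from any $\cT\subseteq[1,h-1]$, gives the exact identity
\begin{equation*}
f_h(n)=\sum_{\cA\subseteq\{0,h\}}\ \sum_{\cT\subseteq[1,h-1]}(-1)^{|\cT|}(\log n)^{-(2-|\cA|)}\,\nu(n)^{(h-1)-|\cT|}\prod_{m\in\cA\cup\cT}\Bigl(\ind{\PP}(n+m)-\tfrac{1}{\log n}\Bigr).
\end{equation*}

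Summing over $3\le n\le x$ (the terms $n\le 2$ contribute $O(1)$) and interchanging the finite sum over $(\cA,\cT)$ with the sum over $n$, the inner sum for each pair has the form $\sum_{n\le x}w(n)\prod_{m\in\cH}\bigl(\ind{\PP}(n+m)-\tfrac{1}{\log n}\bigr)$ with $\cH=\cA\cup\cT$ and $w(n)=(\log n)^{-(2-|\cA|)}\nu(n)^{(h-1)-|\cT|}$, a positive weight bounded by $1$. Since $h\le(\log x)^3$ one has $|w'(u)|\ll (\log x)^3 u^{-1}(\log u)^{-2}$ on $[3,x]$, so $w$ is slowly varying; by~\eqref{eq:modHL} and partial summation the inner sum equals $\fS_0(\cH)\int_3^x w(u)(\log u)^{-|\cH|}\,du$ with an error $\ll x^{1/2+\eps}+\int_3^x|w'(u)|u^{1/2+\eps}\,du\ll x^{1/2+2\eps}$. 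As $w(u)(\log u)^{-|\cH|}=\nu(u)^{(h-1)-|\cT|}(\log u)^{-(2+|\cT|)}$, grouping the surviving main terms by $L\defeq|\cA|+|\cT|$ (which ranges over $0,\dots,h+1$) reproduces exactly $\int_3^x\nu(u)^{-1}(\log u)^{-2}\cD_{h,L}(u)\,du$, because from~\eqref{eq:duck} one has $\nu(u)^{-1}(\log u)^{-2}\cD_{h,L}(u)=\sum_{|\cA|+|\cT|=L}(-1)^{|\cT|}\fS_0(\cA\cup\cT)\,\nu(u)^{(h-1)-|\cT|}(\log u)^{-(2+|\cT|)}$. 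Summing these identities over $L$ and over the pairs $(\cA,\cT)$ yields the claimed formula for $S_h(x)$, with total error at most the sum of the per-pair errors.

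The main obstacle is precisely that error term: there are $2^{h+1}$ pairs $(\cA,\cT)$ and $h$ may be as large as $(\log x)^3$, so obtaining the overall error $O(x^{1/2+\eps})$ requires~\eqref{eq:modHL} in a form uniform over the sets $\cH\subseteq[0,h]$ that occur, together with the verification that the per-pair error, boundary, and partial-summation contributions combine within this budget. Once the requisite uniform Hardy--Littlewood input is granted, the remainder of the proof is the bookkeeping above: tracking the signs $(-1)^{|\cT|}$ and the powers of $\log u$ and $\nu(u)$ so that the main terms assemble into $\sum_{L=0}^{h+1}\int_3^x\nu(u)^{-1}(\log u)^{-2}\cD_{h,L}(u)\,du$.
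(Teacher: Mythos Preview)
Your approach is essentially identical to the paper's: the paper likewise centers each indicator at $1/\log n$, expands the product over $\cA\subseteq\{0,h\}$ and $\cT\subseteq[1,h-1]$, applies the modified Hardy--Littlewood conjecture~\eqref{eq:modHL} via partial summation to each summand, and then regroups by $L=|\cA|+|\cT|$. The uniformity obstacle you flag (summing $2^{h+1}$ individual error terms with $h$ up to $(\log x)^3$) is not addressed in the paper either---it simply asserts a total error of $O(x^{1/2+\eps})$, in keeping with the heuristic nature of the argument (as in Lemke Oliver--Soundararajan~\cite{LO-S}) where~\eqref{eq:modHL} is implicitly assumed to hold with sufficient uniformity in $\cH$.
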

\begin{proof}
First, write $\tind{\PP}(n)\defeq\ind{\PP}(n)-1/\log n$, and put
$$
\widetilde f_h(n)\defeq \(\tind{\PP}(n)+\frac{1}{\log n}\) \(\tind{\PP}(n+h)+\frac{1}{\log n}\) \prod_{0<t<h}\(1-\frac{1}{\log n}-\tind{\PP}(n+t)\).
$$
Recalling that $h\le\log x$, we have the uniform estimate
$$
f_h(n)=\widetilde f_h(n)\(1+O\(\frac{(\log x)^6}{x^{1/2}}\)\)
$$
for all $n>x^{1/2}$.  Since $f_h(n)$ and $\widetilde f_h(n)$ are bounded, it follows that
$$
S_h(x)=\sum_{n\le x}\widetilde f_h(n)+O(x^{1/2+\eps}).
$$
It follows that, up to an error term of size $O(x^{1/2+\eps})$, the quantity $S_h(x)$ equals
$$
\sum_{\cA\subseteq\{0,h\}}\sum_{\cT\subseteq[1,h-1]} (-1)^{|\cT|}\sum_{n\le x}\(\frac{1}{\log n}\)^{2-|\cA|} \(1-\frac{1}{\log n}\)^{h-1-|\cT|} \prod_{t\in\cA\cup\cT}\tind{\PP}(n+t)
$$
(compare to \cite[Equations~(2.5) and (2.6)]{LO-S}).

By the modified Hardy-Littlewood conjecture \eqref{eq:modHL} the estimate
\begin{align*}
\sum_{n\le x}(\log n)^{-c} \prod_{t\in\cH}\tind{\PP}(n+t) &=\int_{3^-}^x(\log u)^{-c}\,d\(\,\sum_{n\le u} \prod_{t\in\cH}\tind{\PP}(n+t)\)\\
&=\fS_0(\cH)\int_3^x(\log u)^{-c-|\cH|}\,du + O(x^{1/2+\eps})
\end{align*}
holds uniformly for any constant $c>0$; consequently, up to an error term of size $O(x^{1/2+\eps})$ the quantity $S_h(x)$ is equal to
$$
\sum_{\cA\subseteq\{0,h\}}\sum_{\cT\subseteq[1,h-1]} (-1)^{|\cT|} \fS_0(\cA\cup\cT) \int_3^x(\log u)^{-2-|\cT|} \nu(u)^{h-1-|\cT|}\,du.
$$
By the definition~\eqref{eq:duck}, we have
$$
S_h(x)=\sum_{L=0}^{h+1}\int_3^x\nu(u)^{-1}(\log u)^{-2}
\cD_{h,L}(u)\,du+O(x^{1/2+\eps}).
$$
\end{proof}

We need the following well known approximation of Vaaler.

\begin{lemma}
\label{lem:Vaaler}
For any $H\ge 1$ there are numbers $a_h,b_h$ such that
$$
\bigg|\psi(t)-\sum_{0<|h|\le H}a_h\,\e(th)\bigg| \le\sum_{|h|\le H}b_h\,\e(th),\qquad a_h\ll\frac{1}{|h|}\,,\qquad b_h\ll\frac{1}{H}\,.
$$
\end{lemma}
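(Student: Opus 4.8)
The plan is to derive this from the Beurling--Selberg extremal functions, following Vaaler; since the statement is a classical theorem of Vaaler, for the paper one may simply cite it, but the reconstruction runs as follows. First I would recall Beurling's extremal majorant of the signum function, the entire function
$$B(z)=\left(\frac{\sin\pi z}{\pi}\right)^{2}\left(\frac{2}{z}+\sum_{n\in\ZZ}\frac{\operatorname{sgn}(n)}{(z-n)^{2}}\right),$$
which has exponential type $2\pi$, is real on $\RR$, satisfies $B(x)\ge\operatorname{sgn}(x)$ for all real $x$ with $B(x)-\operatorname{sgn}(x)\ll(1+|x|)^{-2}$ and $\int_{\RR}\bigl(B(x)-\operatorname{sgn}(x)\bigr)\,dx=1$, and is band-limited to $[-1,1]$ in the sense that the Fourier transform of $B-\operatorname{sgn}$ vanishes outside $[-1,1]$; the corresponding extremal minorant is $-B(-z)$.

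Next, rescaling $B$ to exponential type $2\pi(H+1)$ (i.e. replacing $z$ by $(H+1)z$) and periodizing by Poisson summation, Vaaler's construction yields trigonometric polynomials $\psi_{H}^{\pm}$ of degree $H$ that sandwich $\psi$,
$$\psi_{H}^{-}(t)\le\psi(t)\le\psi_{H}^{+}(t)\quad(t\in\RR),\qquad \psi_{H}^{+}(t)-\psi_{H}^{-}(t)=\frac{1}{H+1}\,F_{H+1}(t),$$
where $F_{H+1}(t)=\sum_{|h|\le H}\bigl(1-\tfrac{|h|}{H+1}\bigr)\e(th)\ge0$ is the Fej\'er kernel; the crucial point — and the reason plain Fej\'er summation of the Fourier series of $\psi$ does not suffice — is that the one-sided extremal property of $B$ forces this difference to be the nonnegative kernel $F_{H+1}$. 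The common midpoint $\psi_{H}(t)\defeq\tfrac12\bigl(\psi_{H}^{+}(t)+\psi_{H}^{-}(t)\bigr)$ is a trigonometric polynomial $\sum_{0<|h|\le H}a_{h}\e(th)$ with $|a_{h}|\ll 1/|h|$.

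Finally I would read off the statement. Since $\psi_{H}$ is the midpoint of the enclosing polynomials,
$$\bigl|\psi(t)-\psi_{H}(t)\bigr|\le\tfrac12\bigl(\psi_{H}^{+}(t)-\psi_{H}^{-}(t)\bigr)=\frac{1}{2(H+1)}\,F_{H+1}(t)=\sum_{|h|\le H}b_{h}\,\e(th),\qquad b_{h}\defeq\frac{1}{2(H+1)}\Bigl(1-\frac{|h|}{H+1}\Bigr)\ge0,$$
whence $a_{h}\ll1/|h|$ and $0\le b_{h}\le\frac1{2(H+1)}\ll1/H$, which is exactly the claim. The one genuinely nontrivial ingredient is Beurling's theorem itself, namely that $B\ge\operatorname{sgn}$ pointwise with the sharp value $\int_{\RR}(B-\operatorname{sgn})=1$ together with the band-limitation; this rests on an interpolation and Poisson-summation computation. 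Everything after that — rescaling, periodizing, forming the midpoint, reading off $a_{h},b_{h}$, and checking that the periodization truncates at exactly $|h|\le H$ (which is arranged by the choice of type $2\pi(H+1)$) — is routine bookkeeping, so in the paper I would simply invoke Vaaler's theorem.
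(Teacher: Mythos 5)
Your reconstruction of Vaaler's theorem via the Beurling--Selberg extremal majorant, rescaling to type $2\pi(H+1)$, periodization, and the Fej\'er-kernel bound on the majorant--minorant gap is the standard and correct proof, and your stated coefficients $a_h\ll 1/|h|$, $b_h\ll 1/H$ match the lemma. The paper itself gives no proof and simply cites \cite{Vaal}, which is exactly what you conclude one should do, so the approaches agree.
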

\begin{proof}
See~\cite{Vaal}.
\end{proof}

\begin{lemma}
\label{lem:ip}
Let $g(n)$ be bounded, $|a_j| \ll 1/|j|$. Then
\begin{align*}
&\sum_{3 < n \le x} \sum_{1 \le |j| \le J} g(n) a_{j} \(\e(j (n+h+1)^{\gamma}) - \e(j (n+h)^{\gamma}) \) \\
&\qquad \ll x^{\gamma-1} \max_{N \le x} \sum_{1 \le j \le J} \Bigg| \sum_{3 < n \le N} g(n) \e(j(n+h)^\gamma) \Bigg|. 
\end{align*}
\end{lemma}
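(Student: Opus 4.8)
The idea is to split off the unimodular factor that makes the two exponentials nearly equal and then transfer the remaining sum, by partial summation, onto the undisplaced exponential sum $\sum_{n}g(n)\,\e(j(n+h)^\gamma)$; the saving $x^{\gamma-1}$ appears because the increment of $t\mapsto t^\gamma$ over a unit interval at $t\asymp x$ has size $\asymp x^{\gamma-1}$.

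First I would fix $j$ with $1\le|j|\le J$ and write, for $3<n\le x$,
\[
\e\big(j(n+h+1)^\gamma\big)-\e\big(j(n+h)^\gamma\big)=\e\big(j(n+h)^\gamma\big)\big(\rho_j(n)-1\big),\qquad
\rho_j(n):=\e\big(j\big[(n+h+1)^\gamma-(n+h)^\gamma\big]\big),
\]
so that $|\rho_j(n)-1|\le 2\pi|j|\big((n+h+1)^\gamma-(n+h)^\gamma\big)\ll|j|(n+h)^{\gamma-1}$. Then the inner sum over $n$ for this $j$ equals $\sum_{3<n\le x}\big(g(n)\,\e(j(n+h)^\gamma)\big)\big(\rho_j(n)-1\big)$, to which I would apply Abel summation with $F_j(N):=\sum_{3<n\le N}g(n)\,\e(j(n+h)^\gamma)$ as the partial sums. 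This yields a boundary term $F_j(\lfloor x\rfloor)\big(\rho_j(\lfloor x\rfloor)-1\big)$, of size $\ll|j|\,x^{\gamma-1}\max_{N\le x}|F_j(N)|$ since $(\lfloor x\rfloor+h)^{\gamma-1}\asymp x^{\gamma-1}$, plus the sum $\sum_{3<n<\lfloor x\rfloor}F_j(n)\big(\rho_j(n+1)-\rho_j(n)\big)$, in which $|\rho_j(n+1)-\rho_j(n)|\ll|j|\,\big|(n+h+2)^\gamma-2(n+h+1)^\gamma+(n+h)^\gamma\big|\ll|j|(n+h)^{\gamma-2}$ by a second-difference estimate for $t\mapsto t^\gamma$.

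It then remains to control $\sum_{3<n<\lfloor x\rfloor}|F_j(n)|\,(n+h)^{\gamma-2}$. Using $|F_j(n)|\le\min\{n,\,\max_{N\le x}|F_j(N)|\}$ together with the convergence of $\sum_n(n+h)^{\gamma-2}$ (valid since $\gamma<1$), one splits the range of $n$ at the point where the two bounds on $|F_j(n)|$ balance, the aim being to bound this sum by $\ll x^{\gamma-1}\max_{N\le x}|F_j(N)|$ as well, so that the inner sum for the given $j$ is $\ll|j|\,x^{\gamma-1}\max_{N\le x}|F_j(N)|$. Multiplying through by $a_j$, using $|a_j j|\ll 1$ from the hypothesis, and summing over $1\le|j|\le J$ (the negative indices are handled in the same way) then produces the asserted estimate.

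The step I expect to be the main obstacle is precisely the weight-variation sum $\sum_n|F_j(n)|(n+h)^{\gamma-2}$: bounding it crudely by $\big(\max_N|F_j(N)|\big)\sum_n(n+h)^{\gamma-2}$ loses the power of $x$, so one must genuinely exploit that the partial sums $F_j(n)$ are small for small $n$ and organise the summation so that the total stays at the scale $x^{\gamma-1}$. Everything else — the ratio identity, the bounds on the first and second forward differences of $t\mapsto t^\gamma$, and the treatment of the endpoint terms in Abel summation — is routine.
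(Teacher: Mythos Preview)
Your approach coincides with the paper's. Both factor the difference of exponentials as $\e(j(n+h)^\gamma)\cdot\phi_{j,h}(n)$ with $\phi_{j,h}(t)=\e\bigl(j[(t+h+1)^\gamma-(t+h)^\gamma]\bigr)-1$ (your $\rho_j(n)-1$), record the bounds $\phi_{j,h}(t)\ll|j|\,t^{\gamma-1}$ and, for the variation, $\partial_t\phi_{j,h}(t)\ll|j|\,t^{\gamma-2}$, and then apply Abel summation with the partial sums $F_j(N)=\sum_{3<n\le N}g(n)\,\e(j(n+h)^\gamma)$. The only difference is cosmetic: the paper uses the integral (continuous) form of partial summation, producing a boundary term $\phi_{j,h}(x)F_j(x)$ and an integral $\int_3^x F_j(u)\,\partial_u\phi_{j,h}(u)\,du$ in place of your discrete difference sum.

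Regarding the obstacle you flag: the paper does \emph{not} address it. After displaying the boundary and integral terms it simply asserts the bound $\ll x^{\gamma-1}\max_{N\le x}\sum_j|F_j(N)|$ with no separate treatment of the integral, so you are not missing any device the paper provides. Be aware, however, that your proposed splitting does not actually recover the factor $x^{\gamma-1}$ either: balancing $|F_j(n)|\le Cn$ against $|F_j(n)|\le M_j:=\max_N|F_j(N)|$ at $n_0\asymp M_j$ gives
\[
\sum_{n\le n_0} n\cdot n^{\gamma-2}+\sum_{n>n_0} M_j\, n^{\gamma-2}\ \ll\ M_j^{\gamma},
\]
and $M_j^{\gamma}\ll x^{\gamma-1}M_j$ only in the trivial range $M_j\gg x$. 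In short, your write-up and the paper's argument leave exactly the same step unjustified at the stated level of generality; you have simply been more explicit in isolating it.
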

\begin{proof}
Let 
$$
\phi_{j,h}(t):=\e\(j((t+h+1)^\gamma-(t+h)^\gamma)\)-1.
$$
Then 
$$
\phi_{j,h}(t)\ll |j| t^{\gamma -1} \mand \frac{\partial\phi_{j,h}(t)}{\partial t}\ll|j|t^{\gamma-2},
$$
so we have
\begin{align*}
&\quad \sum_{3 < n \le x} \sum_{1 \le |j| \le J} g(n) a_{j} \(\e(j (n+h+1)^{\gamma}) - \e(j (n+h)^{\gamma}) \) \\
&\ll \sum_{1 \le j \le J} j^{-1} \Bigg| \sum_{3 < n \le x} g(n) \phi_{j,h}(n) \e(j(n+h)^\gamma) \Bigg|\\
&\ll \sum_{1 \le j \le J} j^{-1} \Bigg| \phi_{j,h}(x) \sum_{3 < n \le x} g(n) \e(j(n+h)^\gamma) \Bigg|\\
&\qquad + \int_3^{x} \sum_{1 \le j \le J} j^{-1} \Bigg| \frac{\partial\phi_{j,h}(u)}{\partial u} \sum_{3 < n \le u} g(n) \e(j(n+h)^\gamma) \Bigg| du\\
&\ll x^{\gamma-1} \max_{N \le x} \sum_{1 \le j \le J} \Bigg| \sum_{3 < n \le N} g(n) \e(j(n+h)^\gamma) \Bigg|. 
\end{align*}
\end{proof}



Finally, we use the following well-known lemma, which provides a characterization of the numbers that occur in the Piatetski-Shapiro sequence $\cN^{(c)}$.

\begin{lemma}
\label{lem:PS}
A natural number $m$ has the form $\fl{n^c}$ if and only if $\mathbf{1}^{(c)}(m) = 1$, where
$\mathbf{1}^{(c)}(m) \defeq \fl{-m^\gamma} - \fl{-(m+1)^\gamma}$.  Moreover,
$$
\mathbf{1}^{(c)}(m)=\gamma m^{\gamma-1}+ \psi(-(m+1)^\gamma) - \psi(-m^\gamma)+O(m^{\gamma-2}).
$$
\end{lemma}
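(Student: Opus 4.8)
The plan is to prove the equivalence and the asymptotic formula separately, both by elementary manipulation of the floor function; this is a standard fact about Piatetski--Shapiro sequences, so I expect no real difficulty.

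For the equivalence, I would start from the tautology that $m=\fl{n^c}$ holds for some positive integer $n$ if and only if $m\le n^c<m+1$ for some $n$, which, upon raising to the power $\gamma=1/c$ (a strictly increasing operation), is equivalent to the existence of an integer $n$ in the half-open interval $I_m\defeq[m^\gamma,(m+1)^\gamma)$. Hence $m\in\cN^{(c)}$ exactly when $I_m$ contains an integer, and more precisely $\mathbf{1}^{(c)}(m)$ counts the integers in $I_m$: using $\rf{t}=-\fl{-t}$, the number of integers in $[a,b)$ equals $\rf{b}-\rf{a}=\fl{-a}-\fl{-b}$, so taking $a=m^\gamma$ and $b=(m+1)^\gamma$ gives this count as $\fl{-m^\gamma}-\fl{-(m+1)^\gamma}=\mathbf{1}^{(c)}(m)$. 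It then remains to check that this count is always $0$ or $1$: by the mean value theorem the length of $I_m$ is $(m+1)^\gamma-m^\gamma=\gamma\xi^{\gamma-1}$ for some $\xi\in(m,m+1)$, and since $0<\gamma<1$ this is at most $\gamma m^{\gamma-1}\le\gamma<1$ for every $m\ge1$; an interval of length less than $1$ contains at most one integer, so $\mathbf{1}^{(c)}(m)\in\{0,1\}$ and equals $1$ precisely when $m\in\cN^{(c)}$.

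For the asymptotic expansion, I would substitute the sawtooth identity $\fl{t}=t-\tfrac12-\psi(t)$ into $\mathbf{1}^{(c)}(m)=\fl{-m^\gamma}-\fl{-(m+1)^\gamma}$; the additive constants $-\tfrac12$ cancel, leaving $\mathbf{1}^{(c)}(m)=(m+1)^\gamma-m^\gamma+\psi(-(m+1)^\gamma)-\psi(-m^\gamma)$. Then I would expand $(m+1)^\gamma-m^\gamma=m^\gamma\big((1+1/m)^\gamma-1\big)$ by Taylor's theorem with Lagrange remainder (or the binomial series), obtaining $\gamma m^{\gamma-1}+\tfrac12\gamma(\gamma-1)\xi^{\gamma-2}$ for some $\xi\in(m,m+1)$; since $\gamma-2<0$ the remainder is $O(m^{\gamma-2})$ with implied constant depending only on $\gamma$, i.e. on $c$. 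Combining the two displays gives the stated formula. The only points needing care are the sign conventions in the floor/ceiling identities and verifying $|I_m|<1$ for all $m\ge1$, which is exactly where the hypothesis $c>1$ (equivalently $\gamma<1$) enters: for $c=1$ the interval has length $1$ and the characterization degenerates.
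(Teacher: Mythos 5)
Your proof is correct and is exactly the standard argument; the paper states this lemma as well known and omits the proof entirely, so your write-up simply supplies the routine details (counting integers in $[m^\gamma,(m+1)^\gamma)$ via $\rf{b}-\rf{a}=\fl{-a}-\fl{-b}$, noting the interval has length $<1$ because $\gamma<1$, and then expanding with $\fl{t}=t-\tfrac12-\psi(t)$ and Taylor's theorem). No gaps.
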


\section{Heristic of Conjecture~\ref{conj:main}}
\label{sec:3}

We assume that $c_1 \neq c_2$. For every even integer $h\ge 2$ we denote
\begin{align*}
\pi_h\big(x;(c_1, c_2)\big) &\defeq \big|\{p_n \le x: p_n \in \cN^{(c_1)}, ~p_{n+1} \in \cN^{(c_2)} \text{~and~} p_{n+1} - p_n =h\}\big| \\
&=\sum_{n\le x}\mathbf{1}^{(c_1)}(n)\mathbf{1}^{(c_2)}(n+h)f_h(n),
\end{align*}
where
$$
f_h(n) \defeq \ind{\PP}(n)\ind{\PP}(n+h) \prod_{0<t<h}\big(1-\ind{\PP}(n+t)\big).
$$
Recall that $\gamma_1 \defeq {c_1}^{-1} \in (0,1)$ and $\gamma_2 \defeq {c_2}^{-1} \in (0,1)$, clearly,
$$
\pi\big(x;(c_1,c_2)\big) =\sum_{\substack{h\le\log x\\2\,\mid\,h}}\pi_h\big(x;(c_1,c_2)\big)+O\(\frac{x^{\gamma_1+\gamma_2-1}}{(\log x)^2}\).
$$

Fixing an even integer $h\in[1,\log x]$ for the moment, our initial goal is to express $\pi_h\big(x;(c_1,c_2)\big)$ in terms of the function
$$
S_h(x)=\sum_{n\le x}f_h(n)
$$
introduced by Lemke Oliver and Soundararajan~\cite[Equation~(2.5)]{LO-S}. By Lemma~\ref{lem:PS} and Lemma~\ref{lem:Vaaler}, taking $J \defeq x^{1-\gamma_1 + \eps}$ with $a_j \ll \ 1/|j|$ and $b_j \ll 1/J$ we write
\begin{align*}
\mathbf{1}^{(c_1)}(n) &= \gamma_1 n^{\gamma_1-1}+ \( \psi(-(n+1)^{\gamma_1} - \psi(-n^{\gamma_1}) \)+O(n^{\gamma_1-2}) \\
&= \gamma_1 n^{\gamma_1-1} + \sum_{1 \le |j| \le J} a_{j} \( \e(j(n+1)^{\gamma_1} )- \e(j n^{\gamma_1}) \) \\
&\quad + O\( \sum_{0 \le |j| \le J} b_{j} \( \e(j n^{\gamma_1}) + \e(j(n+1)^{\gamma_1}) \) \) + O(n^{\gamma_1-2}).
\end{align*}
Similarly, taking that $K \defeq x^{1-\gamma_2+\eps}$ with $a_k \ll \ 1/|k|$ and $b_k \ll 1/K$ we have
\begin{align*}
\mathbf{1}^{(c_2)}(n+h) &= \gamma_2 (n+h)^{\gamma_2-1}+ \( \psi(-(n+h+1)^{\gamma_2} - \psi(-(n+h)^{\gamma_2})\)+O(n^{\gamma_2-2})\\
&=  \gamma_2 (n+h)^{\gamma_2-1} + \sum_{1 \le |k| \le K} a_{k} \( \e(k(n+h+1)^{\gamma_2}) - \e(k(n+h)^{\gamma_2}) \) \\ 
&\qquad + O\( \sum_{0 \le |k| \le K} b_{k} \( \e(k(n+h)^{\gamma_2}) + \e(k(n+h+1)^{\gamma_2}) \) \) + O (n^{\gamma_2-2}).
\end{align*}
Hence we derive the estimate
\begin{align*}
\pi\big(x;(c_1,c_2)\big) &= \sum_{\substack{h\le\log x \\ 2\, \mid \,h}} \Bigg( \cS_1 + \cS_2 + \cS_3 + \cS_4 \\
&\quad + O\( \cS_5 + \cS_6 + \cS_7 + \cS_8 + \cS_9 + \cS_{10} + \cS_{11} + \cS_{12}\) \Bigg) + O\( \frac{x^{\gamma_1+\gamma_2-1}}{\log^2 x} \),
\end{align*}
where
\begin{align*}
\cS_1 &\defeq \sum_{n \le x} \gamma_1 \gamma_2 n^{\gamma_1-1} (n+h)^{\gamma_2-1} f_h(n); \\
\cS_2 &\defeq \sum_{n \le x}  \gamma_1 n^{\gamma_1-1} f_h(n) \sum_{1 \le |k| \le K} a_{k} \(\e(k(n+h+1)^{\gamma_2}) - \e(k(n+h)^{\gamma_2}) \); \\
\cS_3 &\defeq \sum_{n \le x} \gamma_2 (n+h)^{\gamma_2-1} \sum_{1 \le |j| \le J} f_h(n) a_{j} \(\e(j (n+1)^{\gamma_1}) - \e(j n^{\gamma_1}) \); \\
\cS_4 &\defeq \sum_{n \le x} f_h(n) \( \sum_{1 \le |j| \le J} a_{j} \(\e(j(n+1)^{\gamma_1}) - \e(jn^{\gamma_1}) \) \)\\
& \qquad \cdot \( \sum_{1 \le |k| \le K} a_{k} \(\e(k(n+h+1)^{\gamma_2}) - \e(k(n+h)^{\gamma_2})  \) \); \\
\cS_5 &\defeq \sum_{n \le x}  \gamma_1 n^{\gamma_1-1} f_h(n) \sum_{0 \le |k| \le K} b_k \( \e(k(n+h)^{\gamma_2}) + \e(k(n+h+1)^{\gamma_2}) \); \\
\cS_6 &\defeq \sum_{n \le x} \gamma_2 (n+h)^{\gamma_2-1} f_h(n) \sum_{0 \le |j| \le J} b_j \( \e(jn^{\gamma_1}) + \e(j(n+1)^{\gamma_1}) \); \\
\cS_7 &\defeq \sum_{n \le x} f_h(n) \( \sum_{1 \le |j| \le J} a_j \( \e(j(n+1)^{\gamma_1}) - \e(jn^{\gamma_1}) \) \) \\
& \qquad \cdot \( \sum_{0 \le |k| \le K} b_k \( \e(k(n+h)^{\gamma_2}) + \e(k(n+h+1)^{\gamma_2}) \) \); \\
\cS_8 &\defeq \sum_{n \le x} f_h(n) \( \sum_{0 \le |j| \le J} b_j \( \e(-jn^{\gamma_1}) + \e(-j(n+1)^{\gamma_1}) \) \) \\
& \qquad \cdot \( \sum_{1 \le |k| \le K} a_k \( \e(k(n+h+1)^{\gamma_2}) - \e(k(n+h)^{\gamma_2}) \) \); \\
\cS_9 &\defeq \sum_{n \le x} f_h(n) \( \sum_{0 \le |j| \le J} b_j \( \e(jn^{\gamma_1}) + \e(j(n+1)^{\gamma_1}) \) \)\\
& \qquad \cdot \( \sum_{0 \le |k| \le K} b_k \( \e(k(n+h)^{\gamma_2}) + \e(k(n+h+1)^{\gamma_2}) \) \); \\
\cS_{10} & \defeq \sum_{n \le x} \mathbf{1}^{(c_1)}(n) n^{\gamma_2 - 2}; \\
\cS_{11} & \defeq \sum_{n \le x} \mathbf{1}^{(c_2)}(n+h) n^{\gamma_1 - 2}; \\
\cS_{12} &\defeq \sum_{n \le x} n^{\gamma_1 + \gamma_2 -4}.
\end{align*}
We work on the sums separately. 

\subsection{Estimation of $\cS_1$} 
We write $\cS_1 = \cS'_{1} + O\( \cS''_{1} \)$, where
$$
\cS'_{1} \defeq \gamma_1 \gamma_2 \sum_{n \le x} n^{\gamma_1 + \gamma_2 -2} f_h(n)
$$
and
$$
\cS''_{1} \defeq \sum_{n \le x} h n^{\gamma_1 + \gamma_2 -3} f_h(n).
$$
We consider $\cS'_{1}$. By the definition~\eqref{eq:Sh} we have
$$
\cS'_{1} = \gamma_1 \gamma_2 \int_{3^-}^x u^{\gamma_1 + \gamma_2 -2} d\(S_h(u)\) + O(\log x).
$$
Then by Lemma~\ref{lem:fh}, it follows
$$
\sum_{\substack{h \le \log x \\ 2\, \mid \,h}} \cS'_{1} =  \gamma_1 \gamma_2 \sum_{\substack{h \le \log x \\ 2\, \mid \,h}} \sum_{L=0}^{h+1}\int_3^x \frac{u^{\gamma_1 + \gamma_2 -2}}{\nu(u)\log^2 u} \cD_{h,L}(u) \,du + O(\log x).
$$
By a similar argument in~\cite[Section 2.4]{LO-S}, we conclude that the contribution of the terms with $L \ge 3$ is negligible. Taking in account Lemma~\ref{lem:Dom}, we have
\begin{equation}
\label{eq:stt}
\sum_{\substack{h \le \log x \\ 2\, \mid \,h}} \cS'_{1} = \gamma_1 \gamma_2 \sum_{l=1}^5 \int_3^x \frac{u^{\gamma_1 + \gamma_2 -2}}{\nu(u)\log^2 u} \cF_{1,l} (u) \,du + O(\log x),
\end{equation}
where
\begin{align*}
\cF_{1,1}(u) &\defeq\sum_{\substack{h\ge 1\\2\,\mid\,h}} \nu(u)^h; \\
\cF_{1,2}(u) &\defeq\sum_{\substack{h\ge 1\\2\,\mid\,h}} \fS_0(\{0,h\})\nu(u)^h; \\
\cF_{1,3}(u) &\defeq\frac{(-1)}{\nu(u)\log u} \sum_{\substack{h\ge 1\\2\,\mid\,h}} \sum_{\substack{1\le t\le h-1}} \fS_0(\{0,t\})\nu(u)^h;\\
\cF_{1,4}(u) &\defeq\frac{(-1)}{\nu(u)\log u} \sum_{\substack{h\ge 1\\2\,\mid\,h}} \sum_{\substack{1\le t\le h-1}} \fS_0(\{t,h\})\nu(u)^h;\\
\cF_{1,5}(u) &\defeq\frac{1}{(\nu(u)\log u)^2} \sum_{\substack{h\ge 1\\2\,\mid\,h}} \sum_{\substack{1\le t_1<t_2\le h-1}} \fS_0(\{t_1,t_2\})\nu(u)^h.
\end{align*}
By Lemma~\ref{lem:RST} we have
\begin{equation}
\label{eq:F11}
\cF_{1,1}(u) = R_{0,0;0,0}(u) =  \frac{1}{2} \log u + O(1)
\end{equation}
and 
\begin{equation}
\label{eq:F12}
\cF_{1,2}(u) = S_{0,0}(u) = \frac{1}{2} \log u - \frac{1}{2} \log\log u + O(1).
\end{equation}
Then combining~\eqref{eq:F11} and~\eqref{eq:F12}, we have the main term
\begin{align*}
\sum_{l=1}^2 \int_3^x \frac{u^{\gamma_1 + \gamma_2 -2}}{\nu(u)\log^2 u} \cF_{1,l} (u) \,du &= \int_3^x \frac{u^{\gamma_1 + \gamma_2 -2}}{\nu(u)\log^2 u} \( \log u - \frac{1}{2} \log \log u + O(1)\) \,du \\
&= \frac{x^{\gamma_1 + \gamma_2 - 1}}{\log x} + O\( \frac{x^{\gamma_1+\gamma_2-1}\log\log x}{\log^2 x} \).
\end{align*}

By Lemma~\ref{lem:G0ests} and Proposition~\ref{lem:RST}, we have
\begin{equation}
\label{eq:F13}
\cF_{1,3}(u) \ll \frac{1}{\log u} \sum_{\substack{h\ge 1\\2\,\mid\,h}} h^{1/2 + \eps} \nu(u)^h \ll (\log u)^{1/2 + \eps}
\end{equation}
and
$$
\cF_{1,4}(u) \ll (\log u)^{1/2 + \eps},
$$
hence for $l = 3, 4$ we get that
$$
\int_3^x \frac{u^{\gamma_1 + \gamma_2 -2}}{\nu(u)\log^2 u} \cF_{1,l} (u) \,du \ll \frac{x^{\gamma_1 + \gamma_2 -1}}{(\log x)^{3/2 - \eps}}.
$$

By Lemma~\ref{lem:G0ests}, we have
\begin{align*}
\cF_{1,5}(u) &= \frac{1}{(\nu(u)\log u)^2} \sum_{\substack{h\ge 1\\2\,\mid\,h}} \( -\tfrac12 h\log h+\tfrac12Ah+O(h^{1/2+\eps}) \)\nu(u)^h \\
& = \frac{1}{(\nu(u)\log u)^2} \( -\tfrac12 R_{1,1;0,0}(u) + \tfrac12AR_{1,0;0,0} (u) + O(R_{1/2 + \eps/2, 0; 0,0}(u)) \) \\
&\ll 1,
\end{align*}
then
$$
\int_3^x \frac{u^{\gamma_1 + \gamma_2 -2}}{\nu(u)\log^2 u} \cF_{1,5} (u) \,du \ll \int_3^x \frac{u^{\gamma_1 + \gamma_2 -2}}{\nu(u)\log^2 u} \,du \ll \frac{x^{\gamma_1 + \gamma_2 - 1}}{\log^2 x}.
$$
Therefore, we conclude that
$$
\sum_{\substack{h\le\log x \\ 2\, \mid \,h}} \cS'_{1} = \gamma_1 \gamma_2 \frac{x^{\gamma_1 + \gamma_2 - 1}}{\log x} + O\( \frac{x^{\gamma_1 + \gamma_2 -1}}{(\log x)^{3/2 - \eps}} \).
$$
By a similar method, we have
$$
\sum_{\substack{h\le\log x \\ 2\, \mid \,h}} \cS''_{1} \ll \frac{x^{\gamma_1 + \gamma_2 -1}}{(\log x)^{3/2 - \eps}}.
$$

\subsection{Estimation of $\cS_2$} After a partial summation, we apply Lemma~\ref{lem:ip} and obtain that
\begin{align*}
\cS_2 &= \gamma \sum_{3 \le n \le x} n^{\gamma_1-1} f_h(n) \sum_{1 \le |k| \le K} a_{k} \(\e(k(n+h+1)^{\gamma_2}) - \e(k(n+h)^{\gamma_2}) \) \\
&\ll x^{\gamma_2 - 1} \max_{N \le x} \sum_{1 \le k \le K} \Bigg| \sum_{3 < n \le N} n^{\gamma_1 - 1} f_h(n) \e(k(n+h)^{\gamma_2}) \Bigg|.
\end{align*} 
We define a complex function $c(k,h)$ such that
$$
\Bigg| \sum_{3 < n \le N} n^{\gamma_1 - 1} f_h(n) \e(k(n+h)^{\gamma_2}) \Bigg| = c(k,h) \sum_{3 < n \le N} n^{\gamma_1 - 1} f_h(n) \e(k(n+h)^{\gamma_2}).
$$
Note that $|c(k,h)| = 1$ and $c(k,h) = 0$ if $k=0$. Then by a similar argument to~\eqref{eq:stt}, we have
$$
\sum_{\substack{h \le \log x \\ 2\, \mid \,h}} \cS_2 \ll x^{\gamma_2 - 1} \max_{N \le x} \sum_{1 \le k \le K} \sum_{l=1}^5 \int_3^N \frac{u^{\gamma_1 - 1}}{\nu(u)\log^2 u} \cF_{2,l} (u) \, du,
$$
where
\begin{align*}
\cF_{2,1}(u) &\defeq \sum_{\substack{h \ge 1 \\ 2\, \mid \,h}} \nu(u)^h c(k,h) \e(k(u+h)^{\gamma_2}); \\
\cF_{2,2}(u) &\defeq \sum_{\substack{h \ge 1 \\ 2\, \mid \,h}} \fS_0(\{0,h\})\nu(u)^h c(k,h) \e(k(u+h)^{\gamma_2}); \\
\cF_{2,3}(u) &\defeq \frac{(-1)}{\nu(u)\log u} \sum_{\substack{h \ge 1 \\ 2\,\mid\,h}} \sum_{\substack{1\le t\le h-1}} \fS_0(\{t,h\})\nu(u)^h c(k,h) \e(k(u+h)^{\gamma_2}); \\
\cF_{2,4}(u) &\defeq \frac{(-1)}{\nu(u)\log u} \sum_{\substack{h \ge 1 \\2\,\mid\,h}} \sum_{\substack{1\le t\le h-1}} \fS_0(\{t,h\})\nu(u)^h c(k,h) \e(k(u+h)^{\gamma_2});\\
\cF_{2,5}(u) &\defeq\frac{1}{(\nu(u)\log u)^2} \sum_{\substack{h \ge 1 \\2\,\mid\,h}} \sum_{\substack{1\le t_1<t_2\le h-1}} \fS_0(\{t_1,t_2\})\nu(u)^h c(k,h) \e(k(u+h)^{\gamma_2}).
\end{align*}
By Proposition~\ref{lem:RST}, we have
$$
\max\(\cF_{2,1}(u), \cF_{2,2}(u) \) \ll |k|^{-4},
$$
provided that $|k| \ge (\log u)^{-1}$, which is sufficient that $u \ge 4$. This gives that
\begin{align*}
&\qquad x^{\gamma_2 - 1} \max_{N \le x} \sum_{1 \le k \le K} \sum_{l=1}^2 \int_3^N \frac{u^{\gamma_1 - 1}}{\nu(u)\log^2 u} \cF_{2,l} (u) \, du \\
&\ll x^{\gamma_2 - 1}\sum_{1 \le k \le K} \(1 + x^{\gamma_1} (\log x)^{-2} k^{-4}\) \ll x^{\gamma_1 + \gamma_2 - 1} (\log x)^{-2}.
\end{align*}
Similar to estimation of~\eqref{eq:F13}, by Lemma~\ref{lem:G0ests} and Proposition~\ref{lem:RST} we have
\begin{align}
\label{eq:F23}
\max\big\{ \cF_{2,3}(u), \cF_{2,4}(u) \big\} &\ll \frac{1}{\log u} \sum_{\substack{h\ge 1\\2\,\mid\,h}} h^{1/2 + \eps/2} \nu(u)^h c(k,h) \e(k(u+h)^{\gamma_2}) \nonumber \\
&\ll (\log u)^{-1} k^{-4}
\end{align}
and
\begin{align}
\label{eq:F25}
\qquad \cF_{2,5}(u) 
&= \frac{1}{(\nu(u)\log u)^2} \sum_{\substack{h\ge 1\\2\,\mid\,h}} \( -\tfrac12 h\log h + \tfrac12Ah+O(h^{1/2+\eps}) \) \nonumber \\
&\qquad \cdot \nu(u)^h c(k,h) \e(k (u+h)^{\gamma_2}) \nonumber \\
&= \frac{1}{(\nu(u)\log u)^2} \( -\tfrac12 R_{1,1;j,k}(u) + \tfrac12AR_{1,0;j,k} (u) + O(R_{1/2 + \eps/2, 0; j,k}(u)) \) \nonumber \\
&\ll (\log u)^{-2} k^{-4}. 
\end{align}
Combining~\eqref{eq:F23} and~\eqref{eq:F25}, we have
$$
x^{\gamma_2 - 1} \max_{N \le x} \sum_{1 \le l \le J} \sum_{k=3}^5 \int_3^N \frac{u^{\gamma_1 - 1}}{\nu(u)\log^2 u} \cF_{2,k} (u) \, du \ll x^{\gamma_1 + \gamma_2 -1} (\log x)^{-2}. 
$$

\subsection{Estimation of $\cS_3$} Similar to the estimation of $\cS_1$, we write $\cS_3 = \cS'_{3} + O(\cS''_{3})$, where
$$
\cS'_{3} \defeq \sum_{n \le x} \gamma_2 n^{\gamma_2-1}  \sum_{1 \le |j| \le J} f_h(n) a_{j} \(\e(j (n+1)^{\gamma_1}) - \e(j n^{\gamma_1}) \)
$$
and
$$
\cS''_{3} \defeq \sum_{n \le x} h n^{\gamma_2 - 2} \sum_{1 \le |j| \le J} f_h(n) a_{j} \(\e(j (n+1)^{\gamma_1}) - \e(j n^{\gamma_1}) \).
$$
We apply the partial summation as Lemma~\ref{lem:ip}, then
$$
\cS'_{3} \ll x^{\gamma_1 - 1} \max_{N \le x} \sum_{1 \le j \le J} \Bigg| \sum_{3 < n \le N} n^{\gamma_2 - 1} f_h(n) \e(jn^{\gamma_1}) \Bigg|.
$$
We define a complex function $c(j,h)$ such that
$$
\Bigg| \sum_{3 < n \le N} n^{\gamma_2 - 1} f_h(n) \e(jn^{\gamma_1}) \Bigg| = c(j,h) \sum_{3 < n \le N} n^{\gamma_2 - 1} f_h(n) \e(jn^{\gamma_1}).
$$
By the same construction of $\cS_2$, we conclude that
$$
\sum_{\substack{h \le \log x \\ 2\, \mid \,h}} \cS'_{3} \ll x^{\gamma_1 - 1} \max_{N \le x} \sum_{1 \le j \le J} \sum_{l=1}^5 \int_3^N \frac{u^{\gamma_2 - 1}}{\nu(u)\log^2 u} \cF_{3,l} (u) \, du,
$$
where
\begin{align*}
\cF_{3,1}(u) &\defeq \sum_{\substack{h \ge 1 \\ 2\, \mid \,h}} \nu(u)^h c(j,h) \e(ju^{\gamma_1}); \\
\cF_{3,2}(u) &\defeq \sum_{\substack{h \ge 1 \\ 2\, \mid \,h}} \fS_0(\{0,h\})\nu(u)^h c(j,h) \e(ju^{\gamma_1}); \\
\cF_{3,3}(u) &\defeq \frac{(-1)}{\nu(u)\log u} \sum_{\substack{h \ge 1 \\ 2\,\mid\,h}} \sum_{\substack{1\le t\le h-1}} \fS_0(\{t,h\})\nu(u)^h c(j,h) \e(ju^{\gamma_1}); \\
\cF_{3,4}(u) &\defeq \frac{(-1)}{\nu(u)\log u} \sum_{\substack{h \ge 1 \\2\,\mid\,h}} \sum_{\substack{1\le t\le h-1}} \fS_0(\{t,h\})\nu(u)^h c(j,h) \e(ju^{\gamma_1});\\
\cF_{3,5}(u) &\defeq\frac{1}{(\nu(u)\log u)^2} \sum_{\substack{h \ge 1 \\2\,\mid\,h}} \sum_{\substack{1\le t_1<t_2\le h-1}} \fS_0(\{t_1,t_2\})\nu(u)^h c(j,h) \e(ju^{\gamma_1}).
\end{align*}
Therefore, it follows that
$$
\sum_{\substack{h \le \log x \\ 2\, \mid \,h}} \cS'_{3} \ll x^{\gamma_2-1} \sum_{1 \le k \le K} (1 + x^{\gamma_2} (\log x)^{-2} k^{-4}) \ll x^{\gamma_1+\gamma_2-1} (\log x)^{-2}.
$$
Similarly, we know that
$$
\sum_{\substack{h \le \log x \\ 2\, \mid \,h}} \cS''_{3} \ll x^{\gamma_1 + \gamma_2 -1} (\log x)^{-2}.
$$

\subsection{Estimation of $\cS_4$} We apply Lemma~\ref{lem:ip}.
\begin{align*}
\cS_4 &\ll x^{\gamma_1 - 1} \max_{N \le x} \sum_{1 \le j \le J} \Big|\sum_{3 < n \le N} \big( f_h(n) \e(jn^\gamma_1)  \\
&\qquad \cdot \sum_{1 \le |k| \le K} a_{l} \(\e(k(n+h+1)^{\gamma_2}) - \e(k(n+h)^{\gamma_2})  \)  \big) \Big| \\
&\ll x^{\gamma_1 + \gamma_2 - 2} \max_{N \le x} \sum_{1 \le j \le J} \sum_{1 \le k \le K} \big| \sum_{3 < n \le N} f_h(n) \e(jn^{\gamma_1} + k(n+h)^{\gamma_2}) \big|.
\end{align*}
By the same method of $\cS_2$, we have
\begin{align*}
\sum_{\substack{h \le \log x \\ 2\, \mid \,h}} \cS_4 &\ll x^{\gamma_1 + \gamma_2 - 2} \max_{N \le x} \sum_{1 \le j \le J} \sum_{1 \le k \le K} \bigg( 1 + \int_4^N \frac{1}{\nu(u)\log^2 u} (jk)^{-4} \, du \\
&\qquad + \int_3^N \frac{1}{\nu(u)\log^2 u} \big( (\log u)^{-1} (jk)^{-4} + (\log u)^{-2} (jk)^{-4} \big) \, du \bigg) \\
&\ll x^{\gamma_1 + \gamma_2 - 2} \bigg(JK + x(\log x)^{-2}  + x(\log x)^{-3} + x(\log x)^{-4} \bigg) \\
&\ll \frac{x^{\gamma_1 + \gamma_2 -1}}{(\log x)^{2}}.	
\end{align*}

\subsection{Estimation of $\cS_5$} We show that
\begin{equation}
\label{eq:S5}
\sum_{\substack{h \le \log x \\ 2\, \mid \,h}} \cS_5 \ll x^{\gamma_1 + \gamma_2 - 1}.
\end{equation}
The contribution from $k = 0$ of the left-hand side of~\eqref{eq:S5} is 
\begin{equation}
\label{eq:S50}
\ll \sum_{\substack{h \le \log x \\ 2\, \mid \,h}} \sum_{n \le x} \gamma_1 n^{\gamma_1 - 1} f_h(n) K^{-1} \ll x^{\gamma_1} K^{-1} \ll x^{\gamma_1 + \gamma_2 -1 -\eps }.
\end{equation}
By the same estimation of $\cS_2$, the contribution from $k \neq 0$ of the left-hand side of~\eqref{eq:S5} is
$$
\ll K^{-1} \sum_{\substack{h \le \log x \\ 2\, \mid \,h}} \sum_{n \le x} n^{\gamma_1 - 1} f_h(n) \sum_{1 \le k \le K} \e(l(n + h)^{\gamma_2}) \ll x^{\gamma_1 + \gamma_2 - 1 -\eps}. 
$$

\subsection{Estimation of $\cS_6$} We write $\cS_6 = \cS_{61} + O(\cS_{62})$, where
$$
\cS_{61} \defeq \sum_{n \le x} \gamma_2 n^{\gamma_2-1} f_h(n) \sum_{0 \le |j| \le J} b_j \( \e(jn^{\gamma_1}) + \e(j(n+1)^{\gamma_1}) \)
$$
and
$$
\cS_{62} \defeq \sum_{n \le x} \gamma_2 h n^{\gamma_2-2} f_h(n) \sum_{0 \le |j| \le J} b_j \( \e(jn^{\gamma_1}) + \e(j(n+1)^{\gamma_1}) \).
$$
We give a brief proof of the bound
\begin{equation}
\label{eq:S61}
\sum_{\substack{h \le \log x \\ 2\, \mid \,h}} \cS_{61} \ll x^{\gamma_1 + \gamma_2 - 1 - \eps}
\end{equation} 
only, since the bound of 
$$
\sum_{\substack{h \le \log x \\ 2\, \mid \,h}} \cS_{62} \ll x^{\gamma_1 + \gamma_2 - 1 - \eps}
$$
can be derived by the same way. By a similar argument of~\eqref{eq:S50}, the contribution from $j = 0$ of the left-hand side of~\eqref{eq:S61} is
$$
\ll \sum_{\substack{h \le \log x \\ 2\, \mid \,h}} \sum_{n \le x} n^{\gamma_2-1} f_h(n) J^{-1} \ll x^{\gamma_2} J^{-1} \ll x^{\gamma_1 + \gamma_2 - 1 -\eps}.
$$
Similar to the estimation of $\cS_{31}$, the contribution form $j \neq 0$ of the left-hand side~\eqref{eq:S61} is
$$
\ll J^{-1} \sum_{\substack{h \le \log x \\ 2\, \mid \,h}} \sum_{n \le x} n^{\gamma_2 - 1} f_h(n) \sum_{1 \le j \le J} \e(j n^{\gamma_1}) \ll x^{\gamma_1 + \gamma_2 - 1 -\eps}. 
$$

\subsection{Estimation of $\cS_7$ and $\cS_8$} We prove that
\begin{equation}
\label{eq:S7}
\sum_{\substack{h \le \log x \\ 2\, \mid \,h}} \cS_7 \ll x^{\gamma_1 + \gamma_2 - 1 -\eps}.
\end{equation}
The contribution from $k = 0$ of the left-hand side of~\eqref{eq:S7} is
\begin{align*}
&= b_0 \sum_{\substack{h \le \log x \\ 2\, \mid \,h}} \sum_{n \le x} f_h(n) \( \sum_{1 \le |j| \le J} a_j \( \e(j(n+1)^{\gamma_1}) - \e(jn^{\gamma_1}) \) \) \\
&\ll J^{-1} x^{\gamma_1 - 1} \sum_{\substack{h \le \log x \\ 2\, \mid \,h}} \max_{N \le x} \bigg| \sum_{3 < n \le N} f_h(n) \e(j n^{\gamma_1}) \bigg| \ll x^{\gamma_1 + \gamma_2 - 1 - \eps},
\end{align*}
by the same estimation of $\cS_{31}$. The contribution from $k \neq 0$ of the left-hand side of~\eqref{eq:S7} is
\begin{align*}
&\ll \sum_{\substack{h \le \log x \\ 2\, \mid \,h}} \bigg| \sum_{n \le x} f_h(n) \( \sum_{1 \le |j| \le J} a_j \( \e(j(n+1)^{\gamma_1}) - \e(jn^{\gamma_1}) \) \) \nonumber \\
&\qquad \cdot \(\sum_{1 \le k \le K} b_k \e(l(n+h)^{\gamma_2}) \) \bigg| \nonumber \\
&\ll K^{-1} x^{\gamma_1-1} \sum_{\substack{h \le \log x \\ 2\, \mid \,h}} \sum_{1 \le j, l \le J} \max_{N \le x} \bigg| \sum_{3 < n \le N} f_h(n) \e(jn^{\gamma_1} + l(n+h)^{\gamma_2}) \bigg| \nonumber \\
&\ll x^{\gamma_1 + \gamma_2 - 1 - \eps},
\end{align*}
by the same estimation of $\cS_4$. The estimation of $\cS_8$ is similar. 

\subsection{Estimation of $\cS_9$} We prove that
\begin{equation}
\label{eq:S9}
\sum_{\substack{h \le \log x \\ 2\, \mid \,h}} \cS_9 \ll x^{\gamma_1 + \gamma_2 - 1 -\eps}.
\end{equation}
The contribution from $j = k = 0$ of the left-hand side of~\eqref{eq:S9} is
$$
\ll (JK)^{-1} \sum_{\substack{h \le \log x \\ 2\, \mid \,h}} \sum_{n \le x} f_h(n) \ll x^{\gamma_1 + \gamma_2 - 1 -\eps},
$$
by the trivial bound. The contribution from $j = 0$ and $k \neq 0$ of the left-hand side of~\eqref{eq:S9} is
$$
\ll (JK)^{-1} \sum_{\substack{h \le \log x \\ 2\, \mid \,h}} | \sum_{n \le x} f_h(n) \sum_{1 \le l \le J} \e(l (n+h)^{\gamma_2}) | \ll  x^{\gamma_1 + \gamma_2 - 1 -\eps},
$$
by the same estimation of $\cS_2$. The contribution from $j \neq 0$ and $k = 0$ of the left-hand side of~\eqref{eq:S9} is
$$
\ll (JK)^{-1} \sum_{\substack{h \le \log x \\ 2\, \mid \,h}} | \sum_{n \le x} f_h(n) \sum_{1 \le j \le J} \e(j n^{\gamma_2}) | \ll  x^{\gamma_1 + \gamma_2 - 1 -\eps},
$$
by the same estimation of $\cS_{31}$. The contribution from $j \neq 0$ and $l \neq 0$ of the left-hand side of~\eqref{eq:S9} is
$$
\ll (JK)^{-1} \sum_{\substack{h \le \log x \\ 2\, \mid \,h}} | \sum_{n \le x} f_h(n) \sum_{1 \le j,l \le J} \e(j n^{\gamma_1} + l (n+h)^{\gamma_2}) | \ll  x^{\gamma_1 + \gamma_2 - 1 -\eps},
$$
by the same estimation of $\cS_4$. 

\subsection{Estimation of $\cS_{10}$, $\cS_{11}$ and $\cS_{12}$}

We proceed by trivial bounds of the characteristic function:

\begin{align*}
&\sum_{\substack{h\le\log x \\ 2\, \mid \,h}} \cS_{10} \ll \sum_{\substack{h\le\log x \\ 2\, \mid \,h}} \sum_{n \le x} n^{\gamma_2 - 2} \ll x^{\gamma_2 - 1} \log^3(x); \\
&\sum_{\substack{h\le\log x \\ 2\, \mid \,h}} \cS_{11} \ll \sum_{\substack{h\le\log x \\ 2\, \mid \,h}} \sum_{n \le x} n^{\gamma_1 - 2} \ll x^{\gamma_1 - 1} \log^3(x); \\
&\sum_{\substack{h\le\log x \\ 2\, \mid \,h}} \cS_{12} \ll \sum_{\substack{h\le\log x \\ 2\, \mid \,h}} \sum_{n \le x} n^{\gamma_1 + \gamma_2 - 4} \ll x^{\gamma_1 + \gamma_2 - 3} \log^3(x),
\end{align*}

which are covered by the error term. 

\section{Proof of the key proposition}
\label{sec:4}

The proof of Proposition \ref{eq:lemext} starts by a similar construction of the proof of~\cite[Lemma~2.4]{BaGu}. Note that $\nu(u)\asymp 1$ for $u\ge 3$. Let $H \defeq -(\log \nu(n))^{-1}$, which gives that $\nu(u)^h = e^{-h/H}$. Write that
$$
\nu(u)^{h} \e(kh) = e^{-h/H_k} \qquad \text{with} \quad H_k \defeq \frac{H}{1-2\pi i k H}.
$$
Since $\Re (h/H_k) = h/H > 0$ for any positive integer $h$, by the Cahen-Mellin integral it gives that
\begin{align*}
R_{\theta,\vartheta;j,k}(u) &= \sum_{\substack{h\ge 1\\2\,\mid\,h}}	h^\theta(\log h)^\vartheta f(j,k,u,h) e^{-h/H_k}\\
&= \frac{1}{2\pi i} \int_{4-i\infty}^{4+\infty} \bigg( \sum_{\substack{h\ge 1\\2\,\mid\,h}} \frac{h^\theta(\log h)^\vartheta}{h^s} f(j,k,u,h) \bigg) \Gamma(s) H_k^s \, ds,
\end{align*}
where
$$
f(j,k,u,h) \defeq c(j,k,u,h) \e(j u^{\gamma_1} + k (u+h)^{\gamma_2} - kh).
$$
The case that $j=k=0$ is the same as \cite[Lemma~2.4]{BaGu}. When $k \neq 0$ we have
\begin{align*}  
|R_{\theta,0;j,k}(u)| &\le \frac{1}{2\pi} \int_{4-i\infty}^{4+\infty} \bigg( \sum_{\substack{h\ge 1\\2\,\mid\,h}} \big| \frac{h^\theta(\log h)^\vartheta}{h^s} \big|\bigg) \Gamma(s) H_k^s \, ds \\
&\le \frac{2^\theta}{2\pi} \int_{4-i\infty}^{4+i\infty} |2^{-4}\zeta(4-\theta)||
\Gamma(s)H_k^s | \,ds  \\
&\le\frac{2^{\theta-4}|H_k|^4}{2\pi}
\int_{-\infty}^\infty\big|\zeta(4-\theta)\Gamma(4+it)\big|\,dt\\
&\ll |H_k|^4=\bigg(\frac{H^2}{1+4\pi^2 k^2H^2}\bigg)^2,
\end{align*}
which gives that $R_{\theta,0;j,k}(u) \ll k^{-4}$ if $|k| \ge (\log u)^{-1}$ since $H \asymp \log u$ for $u \ge 3$. The bound for $R_{\theta,1;j,k}$ is proved similarly by considering $\zeta'(4-\theta)$. 
  
Secondly, we define 
$$
T_{j,k} (u) \defeq \sum_{h \ge 1} \mathfrak{S}(\{0,h\}) f(j,k,u,h) e^{-h/H_k},
$$
for $j,k \in \RR$ and $u \ge 3$. Since $\fS_0(\{0,h\})=\fS(\{0,h\})-1$ for all integers $h$,
and $\fS(\{0,h\})=0$ if $h$ is odd, it follows that
$$
S_{j,k}(u)=T_{j,k}(u)-R_{0,0;j,k}(u) = T_{j,k}(u) + O(\log u).
$$
Hence, to complete the proof of the lemma, it suffices to show that
$$
T_{j,k}(u)\ll k^{-4} \text{~if~} |k|\ge(\log u)^{-1},
$$
since the case that $j=k=0$ is the same as \cite[Lemma~2.4]{BaGu}. As in the proof of \cite[Proposition~2.1]{LO-S}, we consider the Dirichlet series
$$
F(s)\defeq\sum_{h\ge 1}\frac{\fS(\{0,h\})}{h^s},
$$
which can be expressed in the form
$$
F(s)=\frac{\zeta(s)\zeta(s+1)}{\zeta(2s+2)} \prod_p\(1-\frac{1}{(p-1)^2}+\frac{2p}{(p-1)^2(p^{s+1}+1)}\),
$$
and the final product is analytic for $\Re(s)>-1$. Similar to the proof of the first part of the lemma, using the Cahen-Mellin integral with $k\neq 0$ we have that
$$
\big|T_{j,k}(u)\big| \le \frac{|H_k|^4}{2\pi} \int_{-\infty}^\infty\big|F(4)\Gamma(4+it)\big|\,dt \ll |H_k|^4=\bigg(\frac{H^2}{1+4\pi^2 k^2 H^2}\bigg)^2.
$$
Hence $T_{j,k}(u) \ll k^{-4}$ by a similar argument. 
  
To prove \eqref{eq:lemext}, we choose 
$$
\nu(u)^{h} \e(jkh) = e^{-h/H_{j,k}} \qquad \text{with} \quad H_{j,k} \defeq \frac{H}{1-2\pi i j k H},
$$
and
$$
f(j,k,u,h) \defeq c(j,k,u,h) \e(j u^{\gamma_1} + k (u+h)^{\gamma_2} - jkh).
$$
Everything else follows the same. 
  
\section{A sketch of the heuristic argument for $r \ge 3$}
\label{sec:5}

Let $\cH \defeq (h_1, h_2, \cdots, h_{r-1})$ with $h_i$ positive even integers for $1 \le i \le r-1$. Writing that $h_0 \defeq 0$ and 
$$
\mathfrak{H}_i \defeq h_0 + \cdots + h_i,
$$
we give a brief sketch by writing $\pi(x; \mathbf{c})$ as 
$$
\sum_{n \le x} \sum_{h_1, \dots, h_{r-1} > 0} f_{\cH}(n) \prod_{i=1}^r \mathbf{1}^{(c_i)}(n + \fH_{i-1}), 
$$
where
$$
f_{\cH}(n) \defeq \mathbf{1}_{\mathbb{P}}(n) \prod_{i=1}^{r-1} \bigg[ \mathbf{1}_{\PP}(n + \fH_i) \prod_{0 < t < h_i} \big(1- \mathbf{1}_{\PP} ( n + t + \fH_{i-1} )\big) \bigg].
$$
Based on the arguments for $r = 2$ we mainly need to consider the following two sums, which are
$$
\sum_{\substack{h_1, \dots, h_{r-1} \le \log x \\ 2|h_i}} \cT_1 \quad \text{and} \quad \sum_{\substack{h_1, \dots, h_{r-1} \le \log x \\ 2|h_i}} \cT_2
$$
where
$$
\cT_1 \defeq  \sum_{n \le x} f_{\cH}(n) \prod_{i=1}^{r} \gamma_{i} (n+h_{i-1})^{\gamma_i - 1} 
$$
and
\begin{align*}
\cT_2 &\defeq \sum_{n \le x} f_{\cH}(n) \prod_{i=1}^{r} \bigg( \sum_{1 \le |k_i| \le K_i} a_{k_i} \Big(\e\big(k_i (n+1+\fH_{i-1})^{\gamma_i}\big) - \e\big(k_i (n+ \fH_{i-1})^{\gamma_i}\big) \Big) \bigg),
\end{align*}
where $K_i \defeq x^{1-\gamma_{i} + \eps}$, since $\cT_1$ provides the main term and $\cT_2$ is the most complicated error term. 

\subsection{Estimation of $\cT_1$} Based on the estimation of $\cS_1$, it is deduced that
$$
\cT_1' \defeq \sum_{n \le x} \gamma_1 \cdots \gamma_r n^{\gamma_1 + \dots + \gamma_r - r} f_{\cH}(n)
$$
is the main term and all the rest are covered by the error term. Following a similar argument in \cite[Section~4]{LO-S}, with an error term up to $O((x^{\gamma_1 + \dots + \gamma_r - r + 1})/\log^{3/2-\epsilon}x)$, we deduce that
$$
\sum_{\substack{h_1, \dots, h_{r-1} \le \log x \\ 2|h_i}} \cT_1' = \gamma_1 \cdots \gamma_r \int_{3}^{x}\frac{u^{\gamma_1 + \dots + \gamma_r - r}}{\nu(u)\log^r u}\big(\cF^{r}_{1,1}(u) + \cF^{r}_{1,2}(u)\big)~du
$$
where 
\begin{align*}
&\cF^{r}_{1,1}(u) \defeq \sum_{\substack{h_1, \dots, h_{r-1} \le \log u \\ 2|h_i}} \nu(u)^{h_1 + \cdots + h_{r-1}};\\
&\cF^{r}_{1,2}(u) \defeq \sum_{\substack{h_1, \dots, h_{r-1} \le \log u \\ 2|h_i}} \sum_{0 \leqslant s<t\leqslant r-1}\fS_0(\{0,h_{s+1} + \cdots + h_t \})\nu(u)^{h_{s+1} + \cdots + h_t}.
\end{align*}
By a similar argument to Proposition~\ref{lem:RST} we deduce that $\cT_1'$ gives the main term which is 
$$
\sum_{\substack{h_1, \dots, h_{r-1} \le \log x \\ 2|h_i}} \cT_1' = \frac{x^{1/c_1 + \dots + 1/c_r - r + 1}}{c_1 \cdots c_r \log x} + O \bigg(\frac{x^{1/c_1 + \dots + 1/c_r - r + 1}}{(\log x)^{3/2-\eps}}\bigg).
$$

\subsection{Estimation of $\cT_2$} By a partial summation similar to the estimation of $\cS_4$, it follows that
$$
\cT_2 \ll x^{\gamma_1 + \dots + \gamma_r - r} \max_{N \le x} \sum_{1 \le k_i \le K_i, \forall i \in \{1, \dots, r\} } \bigg| \sum_{3 < n \le N} f_{\cH}(n) \e\Big(\sum_{i=1}^{r} k_i (n + \fH_{i-1})^{\gamma_i} \Big) \bigg|.
$$
We define a complex function $c(\cK,\cH)$ with $\cK\defeq (k_1,\cdots,k_r)$ for $1 \le k_i \le K_i, \forall i=1,\cdots,r$ and $\cH$ as before such that
\begin{align*}
&\bigg| \sum_{3 < n \le N} f_{\cH}(n) \e\Big(\sum_{i=1}^{r} k_i (n + \fH_{i-1})^{\gamma_i} \Big) \bigg| \\
&\qquad = c(\cK,\cH) \sum_{3 < n \le N} f_{\cH}(n) \e\Big(\sum_{i=1}^{r} k_i (n + \fH_{i-1})^{\gamma_i} \Big).
\end{align*}
Similar with above we achieve that
$$
\sum_{\substack{h_1, \dots, h_{r-1} \le \log x \\ 2|h_i}} \cT_2 \ll  x^{\gamma_1 + \dots + \gamma_r - r} \max_{N \le x} \sum_{1 \le k_i \le K_i, \forall i \in \{1, \dots, r\} } \int_{3}^{N} \frac{u^{\gamma_1 + \dots + \gamma_r - r}}{\nu(u)\log^r u}\big(\cF^{r}_{2,1}(u) + \cF^{r}_{2,2}(u)\big)~du
$$
with an error term up to $O((x^{\gamma_1 + \dots + \gamma_r - r + 1})/\log^{3/2-\epsilon}x)$ where
\begin{align*}
&\cF^{r}_{2,1}(u) \defeq \sum_{\substack{h_1, \dots, h_{r-1} \le \log u \\ 2|h_i}} \nu(u)^{h_1 + \cdots + h_{r-1}}c(\cK,\cH)\e\Big(\sum_{i=1}^{r} k_i (n + \fH_{i-1})^{\gamma_i} \Big)\\
&\cF^{r}_{2,2}(u) \defeq \sum_{\substack{h_1, \dots, h_{r-1} \le \log u \\ 2|h_i}} \sum_{0 \leqslant s<t\leqslant r-1}\fS_0(\{0,h_{s+1} + \cdots + h_t \}) \\
&\qquad\qquad\qquad \qquad \qquad \cdot \nu(u)^{h_{s+1} + \cdots + h_t} \e\Big(\sum_{i=s+1}^{t} k_i (u + \fH_{i-1} )^{\gamma_i} \Big).
\end{align*}
By a similar argument to Proposition~\ref{lem:RST} we deduce that  
$$
\sum_{\substack{h_1, \dots, h_{r-1} \le \log x \\ 2|h_i}} \cT_2 \ll \frac{x^{1/c_1 + \dots + 1/c_r - r + 1}}{(\log x)^{3/2-\eps}}.
$$
Assembling these contributions yields our Conjecture~\ref{conj:0}.

\section{Comparison of Conjecture~\ref{conj:main} with numerical data}
\label{sec:6}

The main conjecture~\ref{conj:main} we provided is actually
$$
\pi\big(x; (c_1, c_2)\big) \sim \frac{x^{1/c_1 + 1/c_2 -1}}{c_1 c_2 \log x}
$$
for $c_1 \neq c_2$. Based on an algorithm written in Java, we provide numerical data to compare our conjecture with the actual values. The code can be found on the first author's website: 

\begin{center}
\vspace{0.1in}
https://sites.google.com/view/guozyv
\vspace{0.1in}
\end{center}

Indeed, we list the data as following. The error is the difference between the actual value and the predicted value while the accurate rate is the ratio between the error and the actual value. 

\begin{center}
  \begin{longtable}{ |c|c|c|c|c| } 
  \caption{The comparison between the actual value and the predicted value} \\
   
   \hline
   \vphantom{\Big|} $x$ & $\pi\big(x; (1.05, 1.03) \big)$ & Conjecture~\ref{conj:main} & error & accurate rate  \\ 
   \hline 
   $1 \cdot 10^9$ & $10426864$  & $9095109$ & $1331755$ & $12.77\%$ \\
   $2 \cdot 10^9$ & $19095275$  & $16689632$  & $2405643$ & $12.60\%$ \\
   $3 \cdot 10^9$ & $27219002$ & $23816528$ & $3402474$ & $12.50\%$ \\ 
   $4 \cdot 10^9$ & $35006715$ & $30657784$  & $4348931$ & $12.42\%$ \\
   $5 \cdot 10^9$ & $42566178$ & $37295135$ & $5271043$ & $12.38\%$ \\
   $6 \cdot 10^9$ & $49937752$ & $43774935$ & $6162817$ & $12.34\%$ \\
   $6 \cdot 10^9$ & $49937752$ & $43774935$ & $6162817$ & $12.34\%$ \\
   $7 \cdot 10^9$ & $57164363$ & $50126936$ & $7037427$ & $12.31\%$ \\
   $8 \cdot 10^9$ & $64277619$ & $56371789$ & $7905830$ & $12.30\%$ \\
   $9 \cdot 10^9$ & $71271261$ & $62524631$ & $8746630$ & $12.27\%$ \\
   $1 \cdot 10^{10}$ & $78175966$ & $68597008$ & $9578958$ & $12.25\%$ \\
   $1.1 \cdot 10^{10}$ & $84997244$ & $74598005$ & $10399239$ & $12.23\%$ \\
   $1.2 \cdot 10^{10}$ & $91746464$ & $80534947$ & $11211517$ & $12.22\%$ \\
   $1.3 \cdot 10^{10}$ & $98422803$ & $86413858$ & $12008945$ & $12.20\%$ \\
   $1.4 \cdot 10^{10}$ & $105041524$ & $92239775$ & $12801749$ & $12.19\%$ \\
   $1.5 \cdot 10^{10}$ & $111601884$ & $98016969$ & $13584915$ & $12.17\%$ \\
   $1.6 \cdot 10^{10}$ & $118112030$ & $103749105$ & $14362925$ & $12.16\%$ \\
   $1.7 \cdot 10^{10}$ & $124570606$ & $109439359$ & $15131247$ & $12.15\%$ \\
   $1.8 \cdot 10^{10}$ & $130987775$ & $115090511$ & $15897264$ & $12.14\%$ \\
   $1.9 \cdot 10^{10}$ & $137360696$ & $120705011$ & $16655685$ & $12.13\%$ \\
   $2.0 \cdot 10^{10}$ & $143694021$ & $126285036$ & $17408985$ & $12.12\%$ \\ [0.03in]

   \hline 
   \hline

  \vphantom{\Big|}  $x$ & $\pi\big(x; (1.05, 1.11) \big)$ & Conjecture~\ref{conj:main} & error & accurate rate  \\ 
   \hline 

$1 \cdot 10^9$ & $2922252$ &	$2367045$ & $555207$ & $19.00\%$ \\
$2 \cdot 10^9$ &	$5123813$ &	$4161471$ &	$962342$ &	$18.78\%$ \\
$3 \cdot 10^9$ &	$7124372$ &	$5791608$ &	$1332764$ &	$18.71\%$ \\
$4 \cdot 10^9$ &	$9000888$ &	$7323900$ &	$1676988$ &	$18.63\%$ \\
$5 \cdot 10^9$ &	$10794070$ &	$8787522$ &	$2006548$ &	$18.59\%$ \\
$6 \cdot 10^9$ &	$12521130$ &	$10198768$ &	$2322362$ &	$18.55\%$ \\
$7 \cdot 10^9$ &	$14196850$ &	$11567971$ &	$2628879$ &	$18.52\%$ \\
$8 \cdot 10^9$ &	$15830516$ &	$12902235$ &	$2928281$ &	$18.50\%$ \\
$9 \cdot 10^9$ &	$17424909$ &	$14206723$ &	$3218186$ &	$18.47\%$ \\
$1 \cdot 10^{10}$ &	$18987985$ &	$15485347$ &	$3502638$ &	$18.45\%$ \\
$1.1 \cdot 10^{10}$ &	$20523404$ &	$16741162$ &	$3782242$ &	$18.43\%$ \\
$1.2 \cdot 10^{10}$ &	$22033647$ &	$17976622$ &	$4057025$ &	$18.41\%$ \\
$1.3 \cdot 10^{10}$ &	$23520834$ &	$19193761$ &	$4327073$ &	$18.40\%$ \\
$1.4 \cdot 10^{10}$ &	$24987452$ &	$20394158$ &	$4593294$ &	$18.38\%$ \\
$1.5 \cdot 10^{10}$ &	$26433375$ &	$21579312$ &	$4854063$ &	$18.36\%$ \\
$1.6 \cdot 10^{10}$ &	$27863861$ &	$22750398$ &	$5113463$ &	$18.35\%$ \\
$1.7 \cdot 10^{10}$ &	$29278419$ &	$23908454$ &	$5369965$ &	$18.34\%$ \\
$1.8 \cdot 10^{10}$ &	$30677475$ &	$25054389$ &	$5623086$ &	$18.33\%$ \\
$1.9 \cdot 10^{10}$ &	$32062164$ &	$26188997$ &	$5873167$ &	$18.32\%$ \\
$2.0 \cdot 10^{10}$ &	$33432166$ &	$27312984$ &	$6119182$ &	$18.30\%$ \\ [0.03in]

   \hline 
      
  \end{longtable}
\end{center}

We also compare the actual values and the conjecture by the figures. The $x$-axis is the range of $x$ from $0$ to $10^9$ and the $y$-axis is the value of the actual value or the conjecture value from $0$ to $1.2 \cdot 10^7$. We use the blue line for the actual values and the red line for the conjectured values.

\begin{figure}[htb]
 \centering
  \begin{minipage}{0.48\linewidth}
  \centering
  \includegraphics[width=\linewidth]{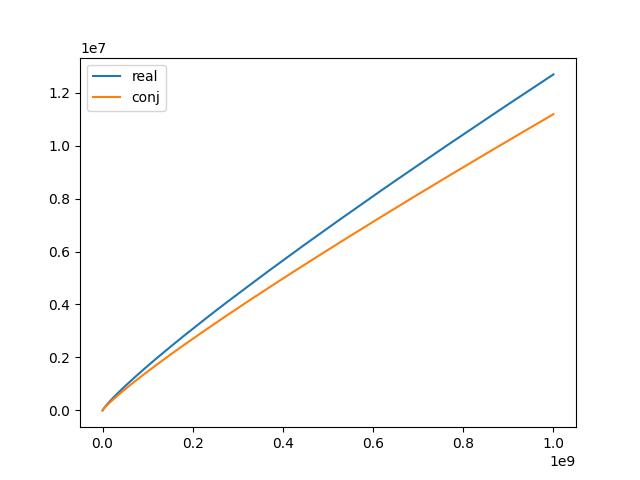}
  \caption{$c_1=1.02, c_2 = 1.05$}
  \end{minipage}\hfill
  \begin{minipage}{0.48\linewidth}
  \centering
  \includegraphics[width=\linewidth]{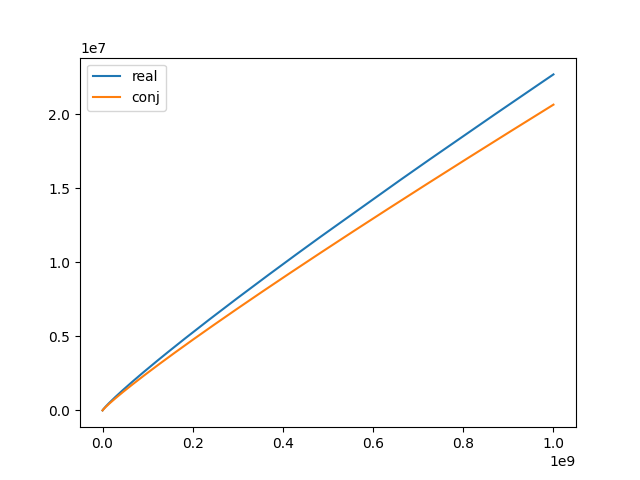}
  \caption{$c_1=1.03, c_2 = 1.01$}
  \end{minipage}
  \caption{Comparison of the conjecture and actual values}
\end{figure}

We also provide a three dimensional figure to show the error terms. The $x$-axis is the value of $c_1$ from $1$ to $1.2$ while the $y$-axis is the value of $c_2$ form $1$ to $1.2$. The $z$-axis stands for the error when $x=10^9$. 

\begin{figure}[htb]
 \centering
  \includegraphics[width=\linewidth]{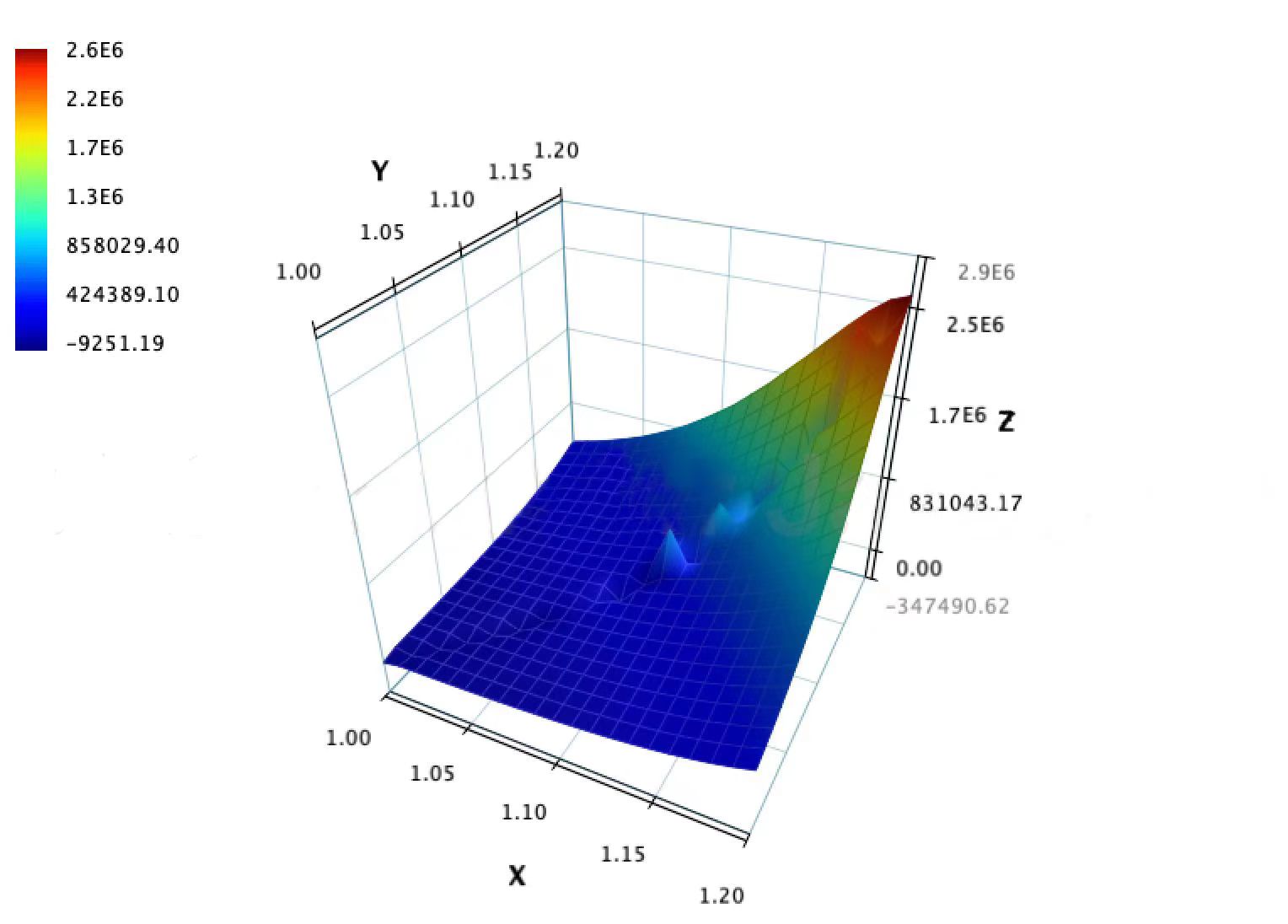}
  \caption{The change of the error term for different $c_1$ and $c_2$}
\end{figure}
 
From the figures, it is not hard to see that Conjecture \ref{conj:main} works fine when $c_1$ and $c_2$ are small. However, the error is not as good for larger $c_1$ and $c_2$. This is because our method does not give a secondary term of the conjecture. We give the detailed reason why our method is not able to provide a secondary term in Section \ref{sec:1.5}. This is also an open problem that can be considered independently. 

\section*{Acknowledgement}
We thank the referee for many valuable suggestions and comments. We also thank an anonymous referee for pointing out that case $c_1=c_2$ is not applicable to our conjecture. We appreciate William Banks, Yuchen Ding, Lingyu Guo, Jinjiang Li and Wenguang Zhai contributed useful conversations and comments to us.  

This work was supported by the National Natural Science Foundation of China (No. 11901447, 12271422), the Natural Science Foundation of Shaanxi Province (No. 2024JC-YBMS-029). and the Shaanxi Fundamental Science Research Project for Mathematics and Physics (No. 22JSY006).

\end{document}